\documentclass[letterpaper, 11 pt]{article}
\usepackage{fullpage,amsthm, amsmath, amssymb, amsfonts,epsfig}
\usepackage[margin=10pt,font=small,labelfont=bf,
labelsep=endash]{caption}
%\usepackage{graphicx}

%\setallmargins{1 in}

\newcommand{\RR}{\mathbb{R}}

 %Yeah, I'm a ditz that way :)

\DeclareCaptionLabelFormat{andtable}%
{#1.#2 \& \tablename.\thetable}

\newcommand{\newword}[1]{\textbf{\emph{#1}}}

\newcommand{\HH}{\mathcal{H}}
\newcommand{\CCC}{\mathcal{C}}
\newcommand{\KK}{\mathcal{K}}

\newcommand{\M}{\mathcal{M}}
\newcommand{\I}{\mathcal{I}}

\newcommand{\J}{\mathcal{J}}

\newcommand{\A}{\mathcal{A}}

%Notations added by Suho

\def\M{\mathcal{M}}
\def\G{\mathcal{G}}

\newtheorem{conj}{Conjecture}[section]

\newtheorem{theorem}[conj]{Theorem}

\newtheorem{proposition}[conj]{Proposition}
\newtheorem{Lemma}[conj]{Lemma}
\newtheorem{lemma}[conj]{Lemma}

\theoremstyle{definition}
\newtheorem{definition}[conj]{Definition}

\newtheorem{remark}[conj]{Remark}

\begin{document}

\title{Generalized permutohedra, h-vector of cotransversal matroids and pure O-sequences}
\author{Suho Oh}
\date{}
\maketitle

\begin{abstract}
Stanley has conjectured that the h-vector of a matroid complex is a pure O-sequence. We will prove this for cotransversal matroids by using generalized permutohedra. We construct a bijection between lattice points inside a $r$-dimensional convex polytope and bases of a rank $r$ transversal matroid.
\end{abstract}

\section{Introduction}

Matroids, simplicial complexes and their h-vectors are all interesting objects that are of great interest in algebraic combinatorics and combinatorial commutative algebra. An \newword{order ideal} is a finite collection $X$ of monomials such that, whenever $M \in X$ and $N$ divides $M$, then $N \in X$. If all maximal monomials of $X$ have the same degree, then $X$ is pure. A pure $O$-sequence is the vector, $h=(h_0=1,h_1,...,h_t)$, counting the monomials of $X$ in each degree. The following conjecture by Stanley has motivated a great deal of research on h-vectors of matroid complexes:

\begin{conj}
The h-vector of a matroid is a pure O-sequence. 
\end{conj}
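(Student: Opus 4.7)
The plan is to produce, for each matroid $M$ of rank $r$, a pure order ideal of monomials $X_M$ whose degree-$i$ count equals $h_i(M)$. The natural framework is Bj\"orner's theorem that the independence complex of $M$ is shellable: once a linear order on the ground set is fixed, $h_i(M)$ equals the number of bases $B$ of $M$ with exactly $i$ internally passive elements (elements $e\in B$ that are not the minimum of the fundamental circuit they determine). Assigning each basis $B$ to the squarefree monomial $m_B = \prod_{e \in \mathrm{IP}(B)} x_e$ then yields a multiset of monomials with the correct count in every degree, reducing Stanley's conjecture to the combinatorial assertion that $\{m_B : B \text{ basis of } M\}$ is a pure order ideal in some polynomial ring.

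Purity can in principle be arranged by choosing the linear order so that every basis realizing a maximal internally passive set has the same passive-set cardinality; this is the easier half. The hard part is the order-ideal condition --- if a monomial $n$ divides $m_B$, one must exhibit a basis $B'$ of $M$ with $m_{B'} = n$. Concretely, one needs a matroid exchange that swaps a prescribed subset of internally passive elements of $B$ for active ones while preserving the basis property. No such exchange is known for arbitrary matroids, and this is precisely the main obstacle and the reason the full conjecture remains widely open.

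To sidestep the obstruction within the cotransversal class, I would pivot to the parameterization developed in the body of this paper: bases of a rank-$r$ transversal matroid are put into bijection with the lattice points of an $r$-dimensional generalized permutohedron. The coordinate structure of a convex lattice-point set makes closure under divisibility essentially automatic, and purity can be read off the dimension of the polytope, yielding the pure O-sequence directly. Any attack on the full conjecture would have to proceed along one of two routes: (i) extending the convex-polytope parameterization beyond the cotransversal class, perhaps via the Bergman fan, the matroid polytope, or polymatroid presentations; or (ii) establishing that the pure O-sequence property is preserved under direct sum, parallel/2-sum, and other structural operations, thereby reducing the conjecture to an indecomposable core of matroids. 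The cotransversal case carried out in the remainder of the paper is the first step along route (i), and the construction of the relevant generalized permutohedron occupies the core of the argument.
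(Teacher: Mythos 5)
The statement you were asked to prove is Conjecture 1.1, which this paper does not prove and which remains open; the paper only establishes the special case of cotransversal matroids (Theorem 2.3). Your proposal is therefore not, and could not honestly be, a proof of the statement, and to your credit you say so: your first two paragraphs correctly identify the standard obstruction, namely that the multiset of monomials $\prod_{e\in \mathrm{IP}(B)} x_e$ built from internally passive elements (note: activity is governed by the fundamental \emph{cocircuit} of $e\in B$, not the fundamental circuit) has the right degree counts by the shelling/activity formula, but there is no known exchange argument making it, or any related construction, a pure order ideal for a general matroid.

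Where your sketch diverges from the paper is in how much work the cotransversal case actually requires, and this is worth flagging because you present it as nearly automatic. In the paper the order-ideal property does follow easily from the inequality description of the transversalhedron (Proposition 4.7: the good lattice points are exactly those off the coordinate hyperplanes), and purity follows from a convexity argument forcing all maximal monomials onto the facet $x_{[r]}=\sum l_{\M,I}$ --- not simply from ``the dimension of the polytope.'' But the heart of the matter, which your sketch omits entirely, is matching the degree statistics with the h-vector: one needs a specific (canonical) fine mixed subdivision of $P_{\M}$, a cell-by-cell bijection between good lattice points and bases refined by type, and the identity $ep_{\M}(B)=d_{\M}(B)-r$ proved via Lemmas 2.8, 4.9 and 5.3. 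Without that, having a convex set of lattice points gives you \emph{a} pure O-sequence, not the h-vector of the matroid. So as a review: your assessment of the general conjecture is accurate, your route for the cotransversal case is the same as the paper's in outline, but the step you call ``read off directly'' is precisely where the paper's technical content lives, and your proposal leaves it unproved.
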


The above conjecture has been proven for cographic matroids by both Merino \cite{Me} and Chari \cite{Ch2}. It also has been proven for lattice-path matroids by Schweig \cite{Sch}. Lattice path matroids are special cases of cotransversal matroids, and we will prove the conjecture for cotransversal matroids. We would also like to note that there has been plenty of interesting results related to this conjecture: \cite{BMMNZ},\cite{Ch},\cite{HS},\cite{Hi},\cite{Pr},\cite{Sto},\cite{TSZ}.

We prove the conjecture for cotransversal matroids by associating a polytope to each cotransversal matroid. The lattice points inside this polytope will be in bijection with bases of the matroid, and will naturally induce a pure order ideal we are looking for. 

In section 2, we will go over properties of transversal matroids. In section 3, we go over properties of generalized permutohedra. In section 4, we show a connection between transversal matroids and generalized permutohedra. In section 5, we prove our main result.

%In section 4, we define the transversalhedra and good lattice points inside each Minkowski cells. We also give a bijection between good lattice points of the transversalhedron coming from $\M$ and bases of $\M$. In section 5, we prove our main result. In section 6, we give an example.

\medskip

\textbf{Acknowledgment} The author would like to thank Alexander Postnikov, Richard Stanley, Criel Merino, Fabrizio Zanello and Hwanchul Yoo for useful discussions. The author would also like to thank Jose Soto for his advice on transversal matroids, and David Speyer for his advice on fine mixed subdivisions.

\section{Preliminaries on matroids}
\label{sec:trans}
%In this section, we will restate the conjecture in a more detailed way, and provide some notations and tools on transversal matroids that we are going to use throughout the paper. We will assume basic familiarity with matroid theory. 

In this section, we will provide some notations and tools on transversal matroids that we are going to use throughout the paper. We will assume basic familiarity with matroid theory. Throughout the paper, unless stated otherwise, a matroid $\M$ will be a rank $r$ matroid over the ground set $[\bar{n}]:=\{\bar{1} < \cdots < \bar{n}\}$.

%Type sequence and base sequence for bases of matroids will be defined, and shown how they are related to calculating the number of externally passive elements of $B$. 

An element $i$ of a base $B$ is \newword{internally active} if $(B \setminus \{i\} ) \cup \{j\}$ is not a base for any $j < i$. An element $e \not \in B$ is \newword{externally active} if $(B \cup \{e\}) \setminus \{j\}$ is a not a base for all $j > e$. If an element not in $B$ is not externally active with respect to $B$, we say that it is \newword{externally passive} with respect to $B$. We denote $e_{\M}(B)$ to count the number of such elements.

\begin{lemma}[\cite{Sch},\cite{Wh2}]
\label{lem:lex}
Let $(h_0,\cdots,h_r)$ be the h-vector of a matroid $\M$. For $0 \leq i \leq r$, $h_i$ is the number of bases of $\M$ with $r-i$ internally active elements.  
\end{lemma}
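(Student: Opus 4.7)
The plan is to deduce this from the classical shelling of the independence complex of a matroid. Recall that for any pure shellable simplicial complex with shelling order $F_1,F_2,\dots,F_s$, one has $h_i = |\{j : |R(F_j)| = i\}|$, where the \emph{restriction} $R(F_j) = \{v \in F_j : F_j \setminus v \subseteq F_k \text{ for some } k < j\}$ records the minimal new face added at step $j$. Thus the task reduces to (i) exhibiting a shelling of the independence complex of $\M$ whose facets are the bases of $\M$, and (ii) identifying the restriction of each basis $B$ with a set of size equal to the number of internally passive elements of $B$.

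For (i), I would invoke Bj\"orner's theorem that listing the bases of $\M$ in lexicographic order gives a shelling: declare $B < B'$ iff the minimum element of the symmetric difference $B \triangle B'$ lies in $B$. (If one wanted to prove this shelling condition from scratch rather than cite it, the key step is the symmetric basis-exchange axiom: for any two bases $B_j$ and $B_k$ with $k < j$, one can find an element $u \in B_k \setminus B_j$ and $v \in B_j \setminus B_k$ so that $B_j \setminus v \cup u$ is a base preceding $B_j$ and containing $B_j \cap B_k$.)

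For (ii), I would unpack what it means for $v \in B$ to lie in $R(B)$. Any facet $B' \neq B$ with $B \setminus v \subseteq B'$ must be of the form $B' = B \setminus v \cup u$ for some $u \notin B$, i.e.\ an adjacent basis. The symmetric difference of $B$ and $B'$ is $\{u,v\}$, so by the definition of the lex order, $B' < B$ iff $u < v$. Therefore $v \in R(B)$ iff there exists some $u < v$ with $B \setminus v \cup u$ a basis, which is exactly the definition of $v$ being an internally passive element. Hence $R(B)$ is precisely the set of internally passive elements of $B$, and its size is $r - |IA(B)|$, where $IA(B)$ denotes the set of internally active elements.

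Combining these, the shelling formula yields $h_i = |\{B : |R(B)| = i\}| = |\{B : r - |IA(B)| = i\}|$, which is the desired statement. The main conceptual obstacle is really the shelling theorem in step (i); the identification of $R(B)$ with the internally passive elements in step (ii) is then a short unwinding of definitions once one has chosen the lex order with the correct orientation.
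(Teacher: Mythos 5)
The paper does not prove this lemma; it cites \cite{Sch} and \cite{Wh2}. Your argument is the standard shelling proof (going back to Bj\"orner/Provan--Billera) that those references give, and it is correct: with bases lex-ordered so that $\min(B\triangle B')\in B$ iff $B<B'$, a facet $B'\neq B$ containing $B\setminus v$ has the form $(B\setminus v)\cup u$, and $B'<B$ iff $u<v$, so the restriction set of $B$ is exactly its set of internally passive elements; the shelling $h$-formula then gives the claim. One small refinement to your shelling sketch: ordinary (not symmetric) basis exchange suffices, and the correct choice is $u=\min(B_j\triangle B_k)$, which lies in $B_k\setminus B_j$ because $B_k<B_j$ and is automatically smaller than the $v\in B_j\setminus B_k$ supplied by exchange, so $(B_j\setminus v)\cup u$ precedes $B_j$ and contains $B_j\cap B_k$.
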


\begin{remark}
\label{rem:dual}
The way we will view $h_i$ in this paper is to count the number of bases in the dual-matroid of $\M$ with $i$ externally passive elements.
\end{remark}

Our main result in this paper is that:

\begin{theorem}
The h-vector of a cotransversal matroid is a pure O-sequence. In other words, Stanley's conjecture is true for cotransversal matroids.
\end{theorem}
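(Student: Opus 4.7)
The plan is to reduce Stanley's conjecture for cotransversal matroids to a geometric counting problem on the transversalhedron announced in the introduction. Let $\M$ be a rank $r$ cotransversal matroid on $[n]$, so its dual $T = \M^*$ is transversal. By Lemma \ref{lem:lex} together with Remark \ref{rem:dual}, it suffices to produce a pure order ideal $X$ of monomials such that the number of monomials of degree $i$ in $X$ equals the number of bases of $T$ with exactly $i$ externally passive elements, for every $0 \le i \le r$.

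Starting from a transversal presentation of $T$, I would build the transversalhedron $P_T \subset \RR^n$ as a Minkowski sum of simplices encoding the presentation; this polytope is a generalized permutohedron in the sense of Section 3, and as such admits fine mixed subdivisions whose Minkowski cells are products of simplices. Inside each Minkowski cell I would single out a distinguished collection of \emph{good} lattice points, and then collect them across cells to obtain the good lattice points of $P_T$. The first key step is then a bijection between the good lattice points of $P_T$ avoiding the coordinate hyperplanes $\{x_i = 0\}$ and the bases of $T$; the bijection is cell-local, exploiting the product structure of each Minkowski cell, and must be shown to glue consistently across shared faces. A refinement of this bijection should match a natural grading on good lattice points (coordinate sum after a shift, or equivalently the number of vanishing coordinates) with the number of externally passive elements of the matched basis, so that counting good lattice points by grading recovers the h-vector.

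To convert the geometric picture into a pure O-sequence, I would send each good lattice point of $P_T$ to a monomial whose degree equals its grading and let $X$ be the resulting set. The order ideal property requires that decreasing a positive coordinate of a good lattice point yields another good lattice point still inside $P_T$, while purity requires that every good lattice point lies weakly below a good lattice point corresponding to an honest basis of $T$. Both are downward-closure statements inside the Minkowski cell structure, and I would prove them by an inductive argument that peels off coordinates one at a time and tracks the effect on external activity.

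The main obstacle, I expect, is the externally-passive bookkeeping. Defining the transversalhedron together with its fine mixed subdivision, and producing a set-theoretic bijection to bases, are already nontrivial but fit within the established theory of generalized permutohedra; what is delicate is choosing the notion of good lattice point so that two conditions hold simultaneously, namely that the grading on lattice points matches external passivity under the bijection, and that the collection of good lattice points is closed under coordinate decrease. Threading this needle across all Minkowski cells, and verifying that the resulting monomial set is genuinely a pure order ideal with top degree $r$, is where I expect the technical heart of the argument to lie.
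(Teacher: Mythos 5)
Your overall blueprint --- pass to the dual transversal matroid $T = \M^*$, build its transversalhedron as a Minkowski sum of simplices, fix a fine mixed subdivision, distinguish good lattice points, biject them with bases of $T$, match a coordinate-sum grading with external passivity, and read off a pure monomial order ideal --- is exactly the paper's. But you have misplaced the difficulty, and that leaves a real gap. You treat the choice of the \emph{notion} of good lattice point as the lever that must simultaneously control closure under coordinate decrease and the grading/external-passivity match. In fact neither is controlled by that notion: the good lattice points of $P_T$ turn out to be exactly the lattice points off the coordinate hyperplanes, for \emph{any} fine mixed subdivision (Proposition~\ref{prop:goodpoints}), and both downward closure and purity then follow essentially from the Minkowski-sum structure (every summand $\Delta_{\{0\}\cup I}$ contains the origin) and convexity. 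What does not come for free is the identity $ep_T(B) = d_T(B) - r$, and for that you must pick a \emph{specific} fine mixed subdivision. The paper builds a canonical subdivision recursively over the nested transversalhedra $P_T^r \subseteq \cdots \subseteq P_T^m$ and proves (Lemma~\ref{lem:main}) that in a canonical cell of type $\I^a$, the simplex factor $\Delta_{J_i}$ omits the origin direction exactly when $I_i \in EP_T(\I^a)$. This structural statement, fed into the degree formula of Lemma~\ref{lem:dform} and the externally-passive count of Lemma~\ref{lem:epform}, is what yields $ep_T(B) = d_T(B) - r$. A generic fine mixed subdivision will violate Lemma~\ref{lem:main}, so without a concrete construction of the subdivision your plan stalls at precisely the step you flag as the technical heart; ``peeling off coordinates'' will not by itself tell you which subdivision to take.

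Two smaller corrections. The grading you want is coordinate sum minus $r$, not ``the number of vanishing coordinates'': good lattice points have no vanishing coordinates, so your parenthetical equivalence is false. And there is no gluing to worry about: Proposition~\ref{prop:goodpoints} shows each off-hyperplane lattice point is a good lattice point of exactly one cell, so the cell-local bijection is globally well defined automatically.
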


Now let's go over the basics of \newword{transversal matroids}. Let $\A$ be a family $(A_1,\ldots,A_r)$ of subsets of the set $L = \{\bar{1},\ldots,\bar{n}\}$. Then the bipartite graph $\G(\A)$ associated with $\A$ has vertex set $L$ and $R = \{1,\ldots,r\}$ with edge set given by $\{(a,b)| a \in L, b \in R $ and $a \in A_b \}$. Throughout the paper, we will call the vertex set $L$ and $R$ of a bipartite graph as the set of \newword{left vertices} and the set of \newword{right vertices} respectively.

Given a subgraph $T$ of this graph, let $lt(T)$ denote the subset of vertices of $L$ covered by edges of $T$ and let $rt(T)$ denote the subset of vertices of $R$ covered by edges of $T$. Then collection of $lt(T)$ for all maximal matchings of $\G(\A)$ form the set of \newword{bases} of a matroid. We denote this matroid by $\M(\A)$. If $\M$ is an arbitrary matroid and $\M \cong \M(\A)$ for some family $\A$ of sets, then we call $\M$ a transversal matroid and $\A$ a \newword{presentation} of $\M$. %$A_1,\cdots,A_m$ are called \newword{members}.

In Figure~\ref{fig:fig1}, we have a presentation of a family $(\{\bar{1},\bar{2},\bar{6},\bar{7},\bar{8},\bar{9}\},\{\bar{3},\bar{4},\bar{5},\bar{6},\bar{7},\bar{8},\bar{9}\})$.

\begin{figure}[htb!]
\centering%
\includegraphics[width=0.25\textwidth]{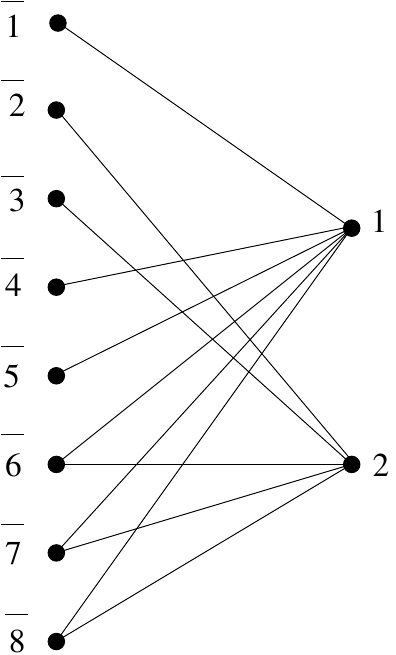}
\caption{Bipartite graph defining $\M$}
\label{fig:fig1}
\end{figure}

%The following two lemmas are given as exercises in \cite{Ox}.

The following lemma is given as an exercise in \cite{Ox}.

\begin{lemma}[\cite{Ox}]
\label{lem:nicepres}
Let $\M$ be a transversal matroid that has rank $r$. Then there exists a presentation of $\M$ that has exactly $r$ members.
\end{lemma}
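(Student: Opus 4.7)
The plan is to induct on the number $m$ of members of a given presentation $\A=(A_1,\dots,A_m)$, reducing $m$ by one per step while leaving $\M=\M(\A)$ unchanged. First, $m\ge r$ always, because $r$ equals the size of a maximum matching of $\G(\A)$, which is bounded by $|J|=m$; the case $m=r$ is the desired conclusion, so assume $m>r$.

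Fix a maximum matching $M$ of $\G(\A)$; since $|M|=r<m$, some $j_0\in J$ is unsaturated by $M$. Set $\A'=(A_1,\dots,\widehat{A_{j_0}},\dots,A_m)$ and let $\M'=\M(\A')$. Every matching in $\G(\A')$ is a matching in $\G(\A)$, so every base of $\M'$ is a base of $\M$; moreover $M$ itself lies in $\G(\A')$, so $\M'$ has rank $r$ as well. I would then show the reverse inclusion on bases, which is the technical heart of the argument.

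Let $B$ be a base of $\M$, witnessed in $\G(\A)$ by a matching $M'$ that uses $j_0$. In the symmetric difference $M\triangle M'$ consider the alternating path $P$ starting at $j_0$ with its $M'$-edge. Since $M$ is maximum, there is no $M$-augmenting path, so $P$ cannot terminate at an $S$-vertex unsaturated by $M$; by parity $P$ must then have even length and end at a $J$-vertex $j_k$ saturated by $M$ but not by $M'$. Swapping the $M'$-edges on $P$ for the $M$-edges on $P$ produces a new matching $M''$ of $\G(\A)$ in which each $S$-vertex on $P$ merely exchanges its $J$-partner, so $lt(M'')=lt(M')=B$ while $j_0$ is now unsaturated. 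Thus $M''$ embeds into $\G(\A')$ and witnesses $B$ as a base of $\M'$, giving $\M'=\M$.

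The main obstacle is this last step: one must verify simultaneously that $P$ cannot terminate on the $S$-side (ruled out by maximality of $M$) and that the flip along $P$ reassigns partners on the $S$-side without altering which $S$-vertices are covered, both of which follow from careful parity bookkeeping along $P$. Once the single-step reduction is in hand, iterating it $m-r$ times yields a presentation of $\M$ with exactly $r$ members.
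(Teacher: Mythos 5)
The paper does not supply a proof of this lemma at all---it is cited as an exercise from Oxley---so there is no in-paper argument to compare against. Your alternating-path argument is correct and self-contained: the reduction step is sound (a component of $M\triangle M'$ containing the $M$-unsaturated vertex $j_0$ is necessarily a path; it cannot end at an $M$-unsaturated $S$-vertex by maximality of $M$; hence it ends at an $M'$-unsaturated $J$-vertex, and the flip preserves the saturated $S$-set while freeing $j_0$), and iterating it $m-r$ times gives the desired $r$-member presentation. One small point worth stating explicitly: if a given base $B$ already admits a witnessing matching avoiding $j_0$, no flip is needed; your argument implicitly covers this, but a one-line remark would make the induction cleaner. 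This is the standard proof of Bondy's theorem on presentations of transversal matroids, which is what Oxley's exercise has in mind.
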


%\begin{lemma}[\cite{Ox}]
%Let $\M$ be a transversal matroid that has rank $r$ and has no coloops. Then every presentation of $\M$ has exactly $r$ non-empty members.
%\end{lemma}

%The next Lemma, which pretty much follows from the previous two, will be used later in this section.

%\begin{lemma}
%\label{lem:transcoloop}
%Let $\M$ be a transversal matroid that has rank $r$ and the set of coloops is exactly $L$. Let's look at a presentation guaranteed by Lemma~\ref{lem:nicepres}. Then there are exactly $|L|$ members that are subsets of $L$.
%\end{lemma}

%\begin{proof}
%Delete edges connected to coloops in the corresponding bipartite graph. Then by the previous lemma, exactly $r-|L|$ members are nonempty.
%\end{proof}

%The \newword{type} of $\bar{i} \in L$ will be defined as the set of vertices of $R$ connected to $\bar{i}$ in $\G(\A)$, and will be denoted by $N(\bar{i})$. Type of a subset $H \subseteq L$ will be given as the multiset obtained by collecting the types for each element of $H$, and will be denoted by $N(H)$. 
Given a vertex $v$ inside a graph $G$, we will use $N(v)$ to denote the set of neighbors of $v$. It is a well-known fact that a subset $H = \{h_1,\ldots,h_r\}$ of $L$ is a base of $\M(\A)$ if and only if $N(h_1),\ldots,N(h_r)$ satisfies the Hall Marriage condition (if union of any $t$ sets of the collection has cardinality at least $t$) \cite{Ox}.

\begin{theorem}[\cite{Ox}]
Let $I_1,\cdots,I_r \subseteq [r]$. The following conditions are equivalent:
\begin{enumerate}
\item For any $S \subseteq [r]$ we have $|\bigcup_{i \in S} I_i| \geq |S|$.
\item There exists a bijection $f$ from $[r]$ to $[r]$ such that for all $t \in [r]$, $f(t) \in I_t$.
\end{enumerate}
The first condition is called the \newword{hall marriage condition}, and the bijection $f$ in the second condition is referred to as the \newword{system of distinct representatives}.
\end{theorem}

\section{Generalized permutohedra}

In this section, we review generalized permutohedra and study some properties of spanning trees that we will be using in this paper. The content related to generalized permutohedra follows that of \cite{Po}.

\begin{definition}[\cite{Po}]
Let $d$ be the dimension of the Minkowski sum $P_1 + \cdots + P_n$, where $P_1,\ldots,P_n$ are convex polytopes. A \newword{Minkowski cell} in this sum is a polytope $B_1 + \cdots + B_n$ of dimension $d$ where $B_i$ is the convex hull of some subset of vertices of $P_i$. A \newword{mixed subdivision} of the sum is the decomposition into union of Minkowski cells such that intersection of any two cells is their common face. A mixed subdivision is \newword{fine} if for all cells $B_1 + \cdots + B_n$, all $B_i$ are simplices and $\sum dim B_i = d$.
\end{definition}

\begin{remark}
All mixed subdivisions in our paper, unless otherwise stated, will be referring to fine mixed subdivisions. 
\end{remark}

%We will use the term \newword{Minkowski face} to be the sum $B_1 + \cdots + B_m$ that has dimension $\leq d$. 
Let $G \subseteq K_{n,r+1}$ be a bipartite graph with no isolated vertices. Label the set of left vertices using $\bar{1},\ldots,\bar{n}$, and label the set of right vertices using $0,\ldots,r$. We will use $\hat{[r]}$ to denote the set $\{0,1,\cdots,r\}$. Let us associate $G$ with the collection $\I_G$ of subsets $I_1,\cdots,I_n \subseteq \hat{[r]}:= \{0,1,\cdots,r\}$ such that $j \in I_i$ if and only if there is an edge in $G$ which connects a left vertex labeled $\bar{i}$ with a right vertex labeled $j$.

 Let $e_0,\ldots,e_r$ be the coordinate vectors of $\RR^{r+1}$. The generalized permutohedron $P_G$ is defined as the Minkowski sum
$$P_G =  \Delta_{I_1} + \cdots + \Delta_{I_n},$$
where $\Delta_I$ is defined to be to be the convex hull of points $e_i$ for $i \in I$.

\begin{remark}[\cite{Po}]
\label{rem:genperm}
This polytope is a special case of the family of polytopes that can be defined by:
$$\{(t_0,\ldots,t_r) \in \RR^{r+1} | \sum_{i=0}^r t_i = z_{\hat{[r]}}, \sum_{i \in I} t_i \geq z_I \}.$$ 
\end{remark}

\begin{proposition}[\cite{Po}]
Let $I_1,\cdots,I_r \subseteq \hat{[r]}$. The following conditions are equivalent:
\begin{enumerate}
\item For any distinct $i_1,\cdots,i_k$, we have $|I_{i_1} \cup \cdots \cup I_{i_k}| \geq k+1$.
\item For any $j \in \hat{[r]}$, there is a system of distinct representatives in $I_1,\cdots,I_{r}$ that avoids $j$.
\end{enumerate}
The above condition is called the \newword{dragon marriage condition}.
\end{proposition}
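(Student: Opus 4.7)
The plan is to reduce both directions to Hall's marriage theorem by a simple shift in perspective: avoiding an element $j$ in an SDR is the same as finding an SDR for the truncated sets $H_i \setminus \{j\}$.

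For the direction (1) $\Rightarrow$ (2), I would fix an arbitrary $j \in \hat{[r]}$ and set $H_i' := H_i \setminus \{j\}$. An SDR of $H_1,\dots,H_r$ avoiding $j$ is exactly an SDR of $H_1',\dots,H_r'$, so it suffices to verify Hall's condition for the primed family. But for any distinct $i_1,\dots,i_k$, removing a single element $j$ from each set decreases the size of the union by at most $1$, so
$$|H_{i_1}' \cup \cdots \cup H_{i_k}'| \geq |H_{i_1} \cup \cdots \cup H_{i_k}| - 1 \geq (k+1)-1 = k,$$
and Hall's theorem produces the required SDR.

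For the direction (2) $\Rightarrow$ (1), I would argue by contrapositive: suppose there exist distinct $i_1,\dots,i_k$ with $|H_{i_1}\cup\cdots\cup H_{i_k}|\le k$. If the union has size $\le k-1$, then standard Hall is violated on this subfamily and no SDR exists at all, so (2) fails trivially. If the union has size exactly $k$, then any SDR must use each of the $k$ elements of this union as representative for one of $i_1,\dots,i_k$; picking any $j$ in the union, every SDR of $H_1,\dots,H_r$ meets $j$, and (2) fails for this $j$.

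There is essentially no obstacle here: once one notices that "SDR avoiding $j$" is the same as an SDR for $H_i\setminus\{j\}$, both directions are immediate applications of Hall's theorem. The only mild subtlety is recognizing that the strict inequality $|H_{i_1}\cup\cdots\cup H_{i_k}|\ge k+1$ (as opposed to the usual $\ge k$) is exactly the slack needed to absorb the loss from deleting one element. I would state this as a one-line lemma (``removing one point drops each union size by at most one'') and let the rest of the proof be transparent.
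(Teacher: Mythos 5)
Your proof is correct: both directions reduce cleanly to Hall's marriage theorem, with the shift to $H_i \setminus \{j\}$ in the forward direction and the contrapositive case analysis (union of size $\le k-1$ vs.\ exactly $k$) in the reverse. The paper does not supply its own proof of this proposition---it is simply cited from Postnikov's \cite{Po}---so there is no in-paper argument to compare against; your approach is the standard and expected one.
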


There is a nice connection between Hall's marriage condition and the dragon marriage condition.

\begin{remark}
\label{rem:halldragon}
When $H_1,\ldots,H_n$ are subsets of $[r]$, they satisfy Hall's marriage condition if and only if $\{0\} \cup H_1,\ldots, \{0\} \cup H_n$ satisfy the dragon marriage condition. 
\end{remark}

\begin{definition}[\cite{Po}]
Let us say that a sequence of nonnegative integers $(a_1,\cdots,a_n)$ is a $G$-\newword{draconian sequence} if $\sum a_i = r$ and, for any subset $\{i_1 < \cdots < i_k\} \subseteq [n]$, we have $|I_{i_1} \cup \cdots \cup I_{i_k}| \geq a_{i_1} + \cdots + a_{i_k}+1$. Equivalently, if the sequence $I_1^{a_1},\cdots,I_n^{a_n}$, where $I^a$ means $I$ repeated $a$ times, satisfies the dragon marriage condition.
\end{definition}

One important property of generalized permutohedra is that fine Minkowski cells can be described by spanning trees of $G$. For a sequence of nonempty subsets $\J = (J_1,\cdots,J_n)$, let $G_{\J}$ be the graph with edges $(\bar{i},j)$ for $j \in J_i$.

\begin{lemma}[\cite{Po}]
Each fine mixed cell in a mixed subdivision of $P_G$ has the form $\Delta_{J_1} + \cdots \Delta_{J_n}$, for some sequence of nonempty subsets $\J = (J_1,\cdots,J_n)$ in $\hat{[r]}$ such that $G_{\J}$ is a spanning tree of $G$.
\end{lemma}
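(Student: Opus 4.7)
The plan is to show that $G_{\J}$ is a spanning tree by combining edge counting (from fineness) with a dimension argument (from the cell being full-dimensional in $P_G$).

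First, I would unpack the definition. A cell in a mixed subdivision of $P_G(y_1,\ldots,y_m)$ has the shape $y_1 B_1 + \cdots + y_m B_m$ where each $B_i$ is the convex hull of a subset of vertices of $\Delta'_{I_i}$; fineness forces each $B_i$ to be a simplex, so $B_i = \Delta'_{J_i}$ for some nonempty $J_i \subseteq I_i$, and in particular $G_{\J}$ is automatically a subgraph of $G$. The fineness condition $\sum_i \dim \Delta'_{J_i} = r$ becomes $\sum_i (|J_i|-1) = r$, i.e., $\sum_i |J_i| = m+r$. Since $G_{\J}$ has one edge for each pair $(i,j')$ with $j \in J_i$, it has exactly $m+r$ edges --- the edge count of a spanning tree of the complete bipartite graph on $m + (r+1)$ vertices.

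Next, since the cell is full-dimensional in $P_G$, we have $\dim \sum_i y_i \Delta'_{J_i} = r$. The geometric input I would use is the identity
\[ \dim \sum_i y_i \Delta'_{J_i} \;=\; \bigl|\textstyle\bigcup_i J_i\bigr| - c(G_{\J}), \]
where $c(G_{\J})$ denotes the number of connected components of $G_{\J}$. Combined with $|\bigcup_i J_i| \leq r+1$ and $c(G_{\J}) \geq 1$, the equation $r = |\bigcup_i J_i| - c(G_{\J})$ forces $|\bigcup_i J_i| = r+1$ and $c(G_{\J}) = 1$. The first says every right vertex is covered, so together with nonemptiness of each $J_i$ the graph $G_{\J}$ spans all $m+r+1$ vertices; the second says $G_{\J}$ is connected. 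A connected graph on $V$ vertices with $V-1$ edges is a tree, so $G_{\J}$ is a spanning tree of $G$.

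The main obstacle is justifying the dimension identity above; once it is in hand, the rest is routine bookkeeping with edges, vertices, and components. I would prove it by passing to the auxiliary graph $H$ on vertex set $\bigcup_i J_i$ whose edges are the pairs $\{j,j'\}$ that co-occur in some $J_i$, and observing that the direction space of the Minkowski sum is exactly the span of the edge-vectors $e_j - e_{j'}$ for $\{j,j'\} \in E(H)$. A standard cycle-space computation then gives $\dim = |V(H)| - c(H)$, and a short check confirms $c(H) = c(G_{\J})$, since each nonempty $J_i$ sits inside a single component of both graphs.
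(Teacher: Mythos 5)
The paper states this lemma without proof, citing Postnikov \cite{Po}; your argument is a correct, self-contained reconstruction of the standard proof. Fineness gives $\sum_i|J_i| = m+r$ edges in $G_{\J}$; the dimension identity $\dim\bigl(\sum_i y_i\Delta'_{J_i}\bigr) = \bigl|\bigcup_i J_i\bigr| - c(G_{\J})$, which is the rank of the graphic matroid of your auxiliary graph $H$, together with full-dimensionality of the cell forces $|\bigcup_i J_i|=r+1$ and $c(G_{\J})=1$; the edge and vertex counts then identify $G_{\J}$ as a spanning tree. Two small remarks. First, each $B_i$ is automatically a simplex, being the convex hull of a subset of vertices of the simplex $\Delta'_{I_i}$, so the substantive content of fineness here is only the dimension condition $\sum_i(|J_i|-1)=r$, not simpliciality of the $B_i$. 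Second, the step where you set $\dim P_G = r$ silently assumes $G$ is connected; this holds throughout the paper (in the application the graph $\overline{G_{\M}}$ has a right vertex $0'$ adjacent to every left vertex), so it creates no gap, but the lemma as stated presupposes it.
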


\begin{remark}
\label{rem:facet}
As noted in \cite{Po}, the above lemma implies that each fine mixed cell $\Delta_{J_1} + \cdots \Delta_{J_n}$ is isomorphic to the direct product of simplices $\Delta_{J_1} \times \cdots \times \Delta_{J_n}$. By choosing any $j$ inside $J_i$ with $|J_i| > 1$, the product $\Delta_{J_1} \times \cdots \Delta_{J_i \setminus \{j\}} \times \cdots \Delta_{J_n}$ describes a facet of the cell $\Delta_{J_1} \times \cdots \times \Delta_{J_n}$. And any facet is of such format.
\end{remark}

Given a spanning tree $T$ of $G$, we denote $\prod_T$ to be the corresponding Minkowski cell $\Delta_{J_1} + \cdots + \Delta_{J_n}$. We can say a bit more about the lattice points in each $\prod_T$:

\begin{proposition}[\cite{Po}]
\label{prop:gplatticepoints}
Any integer lattice point of a fine Minkowski cell $\prod_{G_{\J}}$ in $P_G$ is of form $p_1 + \cdots + p_n$ where $p_i$ is an integer lattice point in $\Delta_{J_i}$.
\end{proposition}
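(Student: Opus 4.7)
The plan is to recast the Minkowski decomposition of a lattice point $q \in \prod_{G_{\J}}'$ as a transportation problem on the spanning tree $G_{\J}$, and then exploit the tree structure to force integrality. Concretely, writing points of $y_i \Delta_{J_i}'$ in the form $p_i = \sum_{j \in J_i} c_{ij} e_j$ with $c_{ij} \geq 0$ and $\sum_j c_{ij} = y_i$, a decomposition $q = p_1 + \cdots + p_m$ is exactly the data of real numbers $c_{ij} \geq 0$, supported on the edges $(i,j')$ of $G_{\J}$, with row sums $\sum_{j \in J_i} c_{ij} = y_i$ and column sums $\sum_{i : j \in J_i} c_{ij} = q_j$ for each $j \in \hat{[r]}$. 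Since $q$ lies in the Minkowski cell, at least one real nonnegative solution $(c_{ij})$ exists.

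Next I would argue that this solution is unique. The underlying edge set has $|E(G_{\J})| = m+r$ (a spanning tree on $m + (r+1)$ vertices), while the marginal equations number $m + (r+1)$ with exactly one linear dependency (total row sum equals total column sum), giving $m+r$ independent constraints and thus a unique solution. The uniqueness can be made constructive by peeling leaves of $G_{\J}$: if $j'$ is a leaf adjacent only to some left vertex $i$, then the column equation at $j'$ forces $c_{ij} = q_j$, and one can delete the edge $(i,j')$ while updating $y_i \leftarrow y_i - q_j$; symmetrically for a leaf left vertex. Each step determines one coordinate $c_{ij}$ and yields a smaller transportation problem on the tree obtained by pruning the leaf.

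Now the integrality follows by induction on the number of edges. The initial marginals $y_i$ and $q_j$ are integers (the latter because $q$ is a lattice point), so each peel assigns $c_{ij}$ an integer value and produces updated marginals that remain integers. Uniqueness of the real solution guarantees that these integer values coincide with the original $c_{ij}$'s, so the lattice point $p_i := \sum_{j \in J_i} c_{ij} e_j$ lies in $y_i \Delta_{J_i}'$ and $q = p_1 + \cdots + p_m$ is the desired decomposition.

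The main conceptual obstacle is really just convincing oneself of uniqueness on a tree; once that is in place, the leaf-peeling argument automatically transfers integrality of the marginals to integrality of every $c_{ij}$. A potential pitfall is that intermediate updated marginals could turn negative, but this cannot occur because the original real solution is nonnegative and unique, so the peeled values at each stage agree with its restriction. This is the only place where the hypothesis that $G_{\J}$ is a \emph{spanning tree} (rather than an arbitrary subgraph) is used, and it is essential: on a graph with a cycle the transportation solution need not be unique, and one can easily write down fractional lattice decompositions that cannot be made integral.
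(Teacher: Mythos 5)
Your proof is correct. Note, though, that the paper itself offers no argument for this proposition --- it is simply quoted from Postnikov \cite{Po} --- so there is no in-paper proof to compare against; what you have supplied is the standard (and, as far as I can tell, Postnikov's own) line of reasoning. The translation into a transportation problem on the edges of the spanning tree $G_{\J}$, the count showing that the incidence matrix of a tree has full column rank (so the real solution with prescribed marginals is unique), and the leaf-peeling that transfers integrality of the marginals to integrality of the unique solution are all sound. The observation that the peeled solution inherits nonnegativity from the original real solution by uniqueness correctly closes the one loose end.

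One side remark you make is false, although it does not affect the proof: you assert that on a bipartite graph with a cycle ``one can easily write down fractional lattice decompositions that cannot be made integral.'' In fact the incidence matrix of any bipartite graph is totally unimodular, so the transportation polytope with integer marginals, if nonempty, always has integral points; an integral decomposition therefore exists for any bipartite $G_{\J}$, not only trees. What the tree hypothesis really buys you is \emph{uniqueness}, which is what lets the leaf-peeling determine the decomposition deterministically; existence of some integral decomposition does not require it. This is worth getting right because uniqueness of the decomposition is quietly used elsewhere in the paper (e.g.\ in the good-lattice-point bookkeeping of Section~\ref{sec:transversalhedra}), whereas mere existence is a weaker and more general statement.
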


Given any subgraph $T$ of $G$, define the \newword{left degree vector} $ld(T)=(d_{\bar{1}},\cdots,d_{\bar{n}})$ and the \newword{right degree vector} $rd(T) = (d_0,\cdots,d_r)$ where $d_{\bar{i}}$ and $d_j$ is the degree of the vertex $\bar{i}$ and $j$ respectively in $T$ minus $1$. The following proposition is stated in the proof of Theorem 11.3 in \cite{Po}.

\begin{proposition}[\cite{Po}]
\label{prop:Gcell}
Let us fix a fine mixed subdivision $\{\prod_{T_1},\cdots,\prod_{T_s}\}$ of the polytope $P_G$. Then the map $\prod_{T_i} \rightarrow ld(T_i)$ is a bijection between fine cells $\prod_{T_i}$ in this subdivision and $G$-draconian sequences.
\end{proposition}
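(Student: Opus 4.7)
The plan is to deduce this from the preceding lemma, which identifies each fine mixed cell as $\prod_T' = y_1 \Delta'_{J_1} + \cdots + y_m \Delta'_{J_m}$ for some spanning tree $T = G_{\J} \subseteq G$. The statement then reduces to showing that the map $T \mapsto ld(T)$, restricted to the spanning trees realized in the fixed subdivision, is a bijection onto the set of $G$-draconian sequences.

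First I would verify that $ld(T)$ is always $G$-draconian. Given any subset $S = \{i_1, \ldots, i_k\} \subseteq [m]$ of left vertices, the subgraph $F \subseteq T$ consisting of the edges of $T$ incident to $S$ is a subforest on $k + |N_T(S)|$ vertices, hence has at most $k + |N_T(S)| - 1$ edges. But a direct count gives $|E(F)| = \sum_{i \in S} \deg_T(i) = \sum_{i \in S}(d_i+1)$, so rearranging yields $|N_T(S)| \geq \sum_{i \in S} d_i + 1$. Since $N_T(S) \subseteq I_{i_1} \cup \cdots \cup I_{i_k}$, this is the draconian inequality. The identity $\sum d_i = r$ is just $|E(T)| - m$ for a spanning tree on $m+r+1$ vertices.

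For the bijectivity I would compare volumes. Each cell $\prod_T'$ is a Minkowski sum of simplices of dimensions $d_i$ summing to $r$, and a fine mixed cell by definition has its factor simplices occupying independent affine directions; a standard calculation then gives its normalized $r$-volume as $\binom{r}{d_1, \ldots, d_m}\, y_1^{d_1} \cdots y_m^{d_m}/r!$. On the other hand, the Minkowski volume polynomial
\[
\mathrm{vol}\, P_G(y_1,\ldots,y_m) \;=\; \sum_{a_1+\cdots+a_m = r} \binom{r}{a_1, \ldots, a_m}\, y_1^{a_1}\cdots y_m^{a_m}\, V(\Delta'_{I_1}[a_1], \ldots, \Delta'_{I_m}[a_m])
\]
decomposes the total volume monomial by monomial. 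Since the cells partition $P_G$, matching the coefficient of $y_1^{a_1}\cdots y_m^{a_m}$ in the two expressions forces
\[
\#\{T : ld(T) = (a_1, \ldots, a_m)\} \;=\; r!\cdot V(\Delta'_{I_1}[a_1], \ldots, \Delta'_{I_m}[a_m]).
\]
It remains to see that this normalized mixed volume equals $1$ when $(a_1,\ldots,a_m)$ is $G$-draconian and $0$ otherwise, which gives both injectivity and surjectivity.

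The hardest step will be pinning down the mixed-volume identity in the last sentence, since it has to be established without circularity (the existence of a fine mixed subdivision of the required type is precisely what we are trying to prove). A cleaner alternative I would try is via the Cayley trick: fine mixed subdivisions of $\sum y_i\Delta'_{I_i}$ correspond bijectively to triangulations of the Cayley polytope $\mathrm{Cay}(\Delta'_{I_1}, \ldots, \Delta'_{I_m})$, and under this correspondence cells with $ld(T) = (a_1, \ldots, a_m)$ match simplices of a fixed ``type'' in the triangulation. The dragon marriage condition of Proposition~3.4 then characterizes exactly which type vectors $(a_1,\ldots,a_m)$ correspond to nondegenerate simplices, and a standard shelling/deformation argument on the Cayley polytope yields that each type appears in exactly one simplex of any given triangulation.
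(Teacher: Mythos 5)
The paper does not prove this proposition: it is cited from \cite{Po} (stated in the proof of Theorem 11.3 there), so there is no internal proof to compare against. On its own terms, the first half of your proposal is correct. The forest-count showing that $ld(T)$ satisfies the draconian inequalities is clean, and $\sum d_i = r$ is the spanning-tree edge count. The volume bookkeeping is also set up correctly: each fine cell contributes $\binom{r}{d_1,\ldots,d_m}\, y_1^{d_1}\cdots y_m^{d_m}/r!$ to the Euclidean volume, the cells tile $P_G$ up to measure zero, and the mixed-volume expansion of $\mathrm{vol}\,P_G(y)$ is a polynomial identity in $y$ (valid on an open set of $y$ because a fine subdivision persists under small perturbation of the $y_i$), so matching coefficients does give $\#\{T : ld(T) = a\} = r!\, V(\Delta'_{I_1}[a_1],\ldots,\Delta'_{I_m}[a_m])$.

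The step you defer --- that this normalized mixed volume is $1$ when $a$ is $G$-draconian and $0$ otherwise --- is, however, exactly the nontrivial content of the proposition, and you do not establish it. This is Postnikov's dragon-marriage mixed volume theorem (\cite{Po}, Section 9), proved there by a direct combinatorial computation with coordinate simplices; it does not follow from the tiling argument, which only shows that $\#\{T : ld(T) = a\}$ is an invariant of $G$ and $a$ independent of the chosen fine subdivision. Your sketched alternative via the Cayley trick has the same gap: the claim that each draconian type occurs in exactly one simplex of every triangulation of the Cayley polytope is precisely what must be proved, and invoking ``a standard shelling/deformation argument'' does not supply a proof. So the reduction is the right one and in fact mirrors how Postnikov himself argues, but the proposal has a genuine gap at its center that should be closed by citing (or reproving) the dragon-marriage mixed volume theorem.
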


For two spanning trees $T$ and $T'$ of $G$, let $U(T,T')$ be the directed graph which is the union of edges of $T$ and $T'$ with edges of $T$ oriented from left to right and edges of $T'$ oriented from right to left. A directed \newword{cycle} is a sequence of directed edges $(i_1,i_2),(i_2,i_3),\cdots,(i_{k-1},i_k),(i_k,i_1)$ such that all $i_1,\cdots,i_k$ are distinct.

\begin{lemma}[\cite{Po}]
\label{lem:utt}
For two spanning trees $T,T'$, the corresponding Minkowski cells can be in the same mixed subdivision only if $U(T,T')$ has no directed cycles of length $\geq 4$.
\end{lemma}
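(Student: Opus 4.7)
The plan is to use the Cayley trick. A fine mixed subdivision of $P_G(y_1,\dots,y_m) = \sum_i y_i \Delta'_{I_i}$ corresponds bijectively to a triangulation of the Cayley polytope $\mathrm{Cay}(\Delta'_{I_1},\dots,\Delta'_{I_m})$, namely the convex hull in $\RR^{r+1}\times \RR^m$ of the points $\{(e_j,e_i) : j \in I_i\}$; under this dictionary, the Minkowski cell $\prod_T'$ attached to a spanning tree $T$ becomes the simplex $\sigma_T$ with vertex set $\{(e_j,e_i) : (i,j') \in T\}$, and two Minkowski cells meet in a common face of the subdivision iff the two simplices meet in a common face of the triangulation. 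So the task reduces to: if $U(T,T')$ carries a directed cycle of length $\geq 4$, then $\sigma_T$ and $\sigma_{T'}$ cannot sit in a common triangulation.

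Given such a directed cycle $i_1 \to j_1' \to i_2 \to j_2' \to \dots \to i_k \to j_k' \to i_1$ of length $2k \geq 4$, the $T$-edges are $(i_a,j_a')$ and the $T'$-edges are $(i_{a+1},j_a')$ (indices mod $k$). The Cayley vertices $v_a := (e_{j_a},e_{i_a})$ of $\sigma_T$ and $w_a := (e_{j_a},e_{i_{a+1}})$ of $\sigma_{T'}$ satisfy
$$\sum_{a=1}^k v_a = \Bigl(\sum_{a=1}^k e_{j_a},\ \sum_{a=1}^k e_{i_a}\Bigr) = \Bigl(\sum_{a=1}^k e_{j_a},\ \sum_{a=1}^k e_{i_{a+1}}\Bigr) = \sum_{a=1}^k w_a,$$
so the point $P := \tfrac{1}{k}\sum_a v_a = \tfrac{1}{k}\sum_a w_a$ lies in the relative interior of the $(k-1)$-dimensional sub-simplex of $\sigma_T$ spanned by $\{v_1,\dots,v_k\}$, and simultaneously in the relative interior of the $(k-1)$-dimensional sub-simplex of $\sigma_{T'}$ spanned by $\{w_1,\dots,w_k\}$; here $k-1 \geq 1$ by the hypothesis $2k \geq 4$.

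If $\sigma_T$ and $\sigma_{T'}$ belonged to a common triangulation, then $\sigma_T \cap \sigma_{T'}$ would be a common face $F$ containing $P$. Uniqueness of barycentric coordinates in the simplex $\sigma_T$ would force every $v_a$ to be a vertex of $F$, i.e., a vertex shared with $\sigma_{T'}$; the symmetric argument forces every $w_a$ to lie in $F$ as well. Translating back through the Cayley dictionary, all $2k$ cycle edges would lie in $T \cap T'$. But the cycle is an undirected cycle on $2k$ distinct vertices, so $T \cap T'$ would contain an undirected cycle, contradicting the fact that $T \cap T'$ is a subforest of the tree $T$.

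The main obstacle is the setup in the first paragraph: one has to invoke or verify the Cayley-trick dictionary in the generalized-permutohedron setting, so that fine mixed subdivisions really pull back to honest triangulations of the Cayley polytope and relative interiors correspond. Granting that dictionary, everything else is the one linear identity above together with an acyclicity count. An alternative route that avoids the Cayley trick is to build a point in the relative interior of both $\prod_T'$ and $\prod_{T'}'$ directly, by distributing weights along the cycle inside the simplices $\Delta'_{J_{i_a}(T)}$ and $\Delta'_{J_{i_{a+1}}(T')}$ so that the two sums agree, but the bookkeeping boils down to the same identity in disguise.
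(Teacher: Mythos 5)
The paper cites this lemma from Postnikov (\cite{Po}) and gives no proof of its own, so there is nothing in-paper to compare against. Your Cayley-trick argument is correct: the dictionary between fine mixed subdivisions of $\sum y_i \Delta'_{I_i}$ (for any positive $y_i$) and triangulations of the Cayley polytope is standard, under it $\prod'_T$ does become the simplex on $\{(e_j,e_i): (i,j')\in T\}$, and adjacency of cells matches adjacency of simplices. The identity $\sum_a v_a=\sum_a w_a$ holds because the cycle returns to its start, the barycenter $P$ lies in the relative interior of the sub-simplex spanned by the $v_a$ (resp.\ the $w_a$) since those $k\ge 2$ Cayley points are affinely independent (they are vertices of a simplex), and uniqueness of barycentric coordinates forces any common face containing $P$ to contain all $2k$ of them, hence $T\cap T'$ would contain a $2k$-cycle, impossible in a tree. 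One small point worth making explicit: you should note that the $2k$ vertices $v_1,\dots,v_k,w_1,\dots,w_k$ are pairwise distinct (equivalently, the $2k$ cycle edges are distinct), which follows because the $i_a$ and the $j_a$ are all distinct in a directed cycle; otherwise the cycle-in-a-tree contradiction would not be airtight. Postnikov's own argument avoids the Cayley trick and instead produces a point in the interiors of both Minkowski cells directly by distributing weight along the alternating cycle; as you observe, that is the same linear identity read in the Minkowski sum rather than in the Cayley polytope, so the two proofs are essentially equivalent. The Cayley phrasing trades a verification of the Cayley-trick dictionary for a very transparent acyclicity finish, which is a reasonable trade.
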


We will say that $T,T'$ are \newword{compatible} if it satisfies the condition of Lemma~\ref{lem:utt}, and \newword{incompatible} if not.

Before we end, we will state some basic facts about spanning trees of bipartite graphs that we will be using. Recall that we are assuming $G$ to be a bipartite graph inside $K_{n,r+1}$. Let us add in the extra assumption that $n \geq r$. Let $T$ be a spanning tree of $G$. We use $LD_T(I)$ to denote the sum of $d_i$'s for $i \in I$, where $ld(T) = (d_1,\cdots,d_n)$. We use $N_T(I)$ to denote the set of neighbors of vertex set $I$ inside a graph $T$.  

\begin{lemma}
\label{lem:lrtree}
Let $T$ be any spanning tree of $G \subseteq K_{n,r+1}$, where $n \geq r$. Set $I$ to be a subset of left vertices such that $|I| < n$. Then $|N_T(I)| \geq LD_T(I) + 1$. Let $J$ be a subset of right vertices such that $|J| < r+1$. Then $LD_T(N_T(J)) \geq |J|$.
\end{lemma}

\begin{proof}
The first claim follows directly from the fact that $T$ is a spanning tree. For the second claim, consider the induced subgraph $S$ of $T$ by looking at the vertices $J \cup N_T(J) \cup N_T(N_T(J)) = N_T(J) \cup N_T(N_T(J))$. Let $c$ be the number of connected components of $S$, and divide $J$ into $J_1,\ldots,J_c$ such that for each $i$, the union $N_T(J_i) \cup N_T(N_T(J_i))$ is the set of vertices of a connected component $S_i$. For each $i$, we have $|N_T(N_T(J_i))| > |J_i|$ since $T$ is a spanning tree. 

In general, given a spanning tree $T$ of $K_{n,r}$, we have $LD_T([n]) = r-1$ since the total number of edges is $n+r-1$. The component $S_i$ is a spanning tree with left vertex set $N_T(J_i)$ and right vertex set $N_T(N_T(J_i))$. Hence we get $LD_T(N_T(J_i)) = LD_{S_i}(N_T(J_i)) = |N_T(N_T(J_i))| - 1 \geq |J_i|$. Summing the inequalities for all components $S_i$, we get $LD_T(N_T(J)) \geq |J|$.

%Set $t$ to be an element of $N_T(N_T(J_i)) \setminus J_i$ with degree $1$ inside $S_i$. By deleting the vertex $t$ from $S_i$, we get a spanning tree $Q$. The left vertex set of $Q$ is $N_T(J_i)$, and the right vertex set of $Q$ is $N_T(N_T(J_i)) \setminus \{t\}$. Now 

%we use induction on the size of $J$. If $|J|=1$, the claim is obvious. Now assume for the sake of induction that the claim is true for $J' = J \setminus \{j\}$ for each $j \in J$. First consider the case when there is some $j \in J$ that has degree at least $2$. If $N_T(j) \cap N_T(J \setminus \{j\}) \geq 2$, we get a cycle in $T$ and get a contradiction. So there is at least one 

%If there is an edge connecting $j$ to some vertex of $N_T(J')$, the claim follows. If not, then some vertex of $N_T(j)$ has to have degree at least $2$, from which the claim also follows.
\end{proof}

%\begin{lemma}
%\label{lem:iuni} 
%Fix $T$, a spanning tree of $G \subseteq K_{n,r+1}$, where $n \geq r$. For each $\hat{i}$ such that $0 \not \in nbr(\hat{i})$, there exists a unique element in $nbr(\hat{i})$ such that there exists a path to $0$ not passing through $\hat{i}$. We dedicate $t_{i,0}$ to denote that element.
%\end{lemma}

%\begin{proof}
%There exists such an element since $T$ is a spanning tree of $G$. There cannot be more than one such element since otherwise, we get a cycle in $T$.
%\end{proof}

\begin{lemma}
\label{lem:infoconn}
Let $T$ and $T'$ be spanning trees of $G \subseteq K_{n,r+1}$, where $n \geq r$. Denote the left-degree vector of $T$ as $(d_1,\cdots,d_n)$ and the left-degree vector of $T'$ as $(d_1',\cdots,d_n')$. If after some relabeling of the set $[n]$,
\begin{itemize}
\item $d_n < d_n'$ and $d_1 > d_1'$,
\item $d_i \geq d_i'$ for all $i \not = n$,
\item $0$ is connected to $\bar{1},\bar{n}$ via an edge in $T'$,
\item $0$ is connected to $\bar{n}$ via an edge in $T$,
\end{itemize}

then $T$ and $T'$ are incompatible.
\end{lemma}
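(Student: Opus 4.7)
The plan is to argue by contradiction: assume $T$ and $T'$ are compatible, i.e.\ $U(T,T')$ has no directed cycle of length $\geq 4$, and derive a contradiction with the five hypotheses. The starting observation is that the given edges $(m,0') \in T$ and $(1,0') \in T'$ produce directed edges $m \to 0'$ and $0' \to 1$ in $U(T,T')$. So constructing a directed cycle of length $\geq 4$ reduces to finding a directed path from $1$ to $m$ in the subdigraph $U(T,T') \setminus \{0'\}$; any such path has length $\geq 2$ by bipartiteness, so closing it with $m \to 0' \to 1$ yields a cycle of length $\geq 4$. I would therefore assume, for contradiction, that $m$ is not reachable from $1$ in $U(T,T') \setminus \{0'\}$.

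Let $R$ (resp.\ $R'$) be the set of left (resp.\ right) vertices reachable from $1$ in $U(T,T') \setminus \{0'\}$, and write $p = |R|$, $q = |R'|$. Then $1 \in R$, $m \notin R$ (by the contradiction assumption), and $0' \notin R'$ (by construction). The definition of reachability forces two closure properties: (a) every $T$-edge leaving a vertex of $R$ lands in $R' \cup \{0'\}$, and (b) every $T'$-edge incident to a vertex of $R^c := [m] \setminus R$ lands in $R'^c \cup \{0'\}$ (where $R'^c := \{1',\ldots,r'\} \setminus R'$), because a $T'$-edge from $R^c$ to $R'$ would propagate reachability into $R^c$.

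Now I would apply the forest inequality on two complementary induced subforests. First, $T[R \cup R' \cup \{0'\}]$ is a subforest on $p+q+1$ vertices, so has at most $p+q$ edges; by (a), counting the $T$-edges from the $R$ side gives $\sum_{\ell \in R}(d_\ell + 1) \leq p+q$, i.e.\ $\sum_R d_\ell \leq q$. Combining this with $\sum_\ell d_\ell = \sum_\ell d_\ell' = r$ and the hypotheses $d_1 > d_1'$ together with $d_\ell \geq d_\ell'$ for $\ell \neq m$ (hence for every $\ell \in R$, since $m \notin R$), one obtains $\sum_R d_\ell \geq \sum_R d_\ell' + 1$, and transferring to $R^c$ yields $\sum_{R^c} d_\ell' \geq \sum_{R^c} d_\ell + 1 \geq (r-q)+1$. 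Second, $T'[R^c \cup R'^c \cup \{0'\}]$ is a subforest on $m + r - p - q + 1$ vertices, with at most $m+r-p-q$ edges; by (b), every $T'$-edge incident to $R^c$ is accounted for in this subforest, so $\sum_{\ell \in R^c}(d_\ell' + 1) \leq m+r-p-q$, i.e.\ $\sum_{R^c} d_\ell' \leq r - q$. The two bounds $\sum_{R^c} d_\ell' \geq r-q+1$ and $\sum_{R^c} d_\ell' \leq r-q$ contradict each other, so $m \in R$ after all, giving the desired cycle.

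The main obstacle is identifying the two complementary subforests whose edge counts are of matching magnitude; once one notices that the reachability closure (a) controls $T$-edges leaving $R$ while (b) simultaneously controls $T'$-edges leaving $R^c$, the two forest bounds sit on opposite sides of the same cut, and the hypothesis $d_1 > d_1'$ supplies exactly the $+1$ slack needed to force a collision. The other two hypotheses $d_m < d_m'$ and $0' \in \chi_{T'}(\{m\})$ turn out to be either implied by the remaining conditions or used only implicitly through the $T'$-degree bookkeeping on $R^c$.
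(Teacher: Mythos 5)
Your proof is correct, and it arrives at the contradiction by a route that is related in spirit to the paper's but genuinely different in its mechanics. Both arguments assume compatibility, observe that the hypotheses put $m\to 0'$ and $0'\to 1$ into $U(T,T')$, and then trace what is reachable from $1$ by alternating $T$/$T'$ edges while avoiding $0'$. The paper does this iteratively: it sets $H_1'=\chi_T(\{1\})\setminus\{0'\}$, repeatedly applies $\chi_T\circ\chi_{T'}^{-1}$, and argues via Lemma~\ref{lem:lrtree} together with $LD_T\ge LD_{T'}$ on $[m]\setminus\{m\}$ that the sets $H_i'$ grow strictly, so that finiteness forces the forbidden 4-cycle through $\chi_{T'}(\{m\})$. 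You instead take the full reachability closure $(R,R')$ in one shot and play two complementary forest-edge counts against each other: the induced subforest $T[R\cup R'\cup\{0'\}]$ bounds $\sum_R d_\ell\le q$, while $T'[R^c\cup R'^c\cup\{0'\}]$ bounds $\sum_{R^c}d_\ell'\le r-q$, and the hypothesis $d_1>d_1'$ (with $1\in R$, $m\notin R$) supplies the unit of slack that makes these incompatible via $\sum d_\ell=\sum d_\ell'=r$. This is tighter bookkeeping than the paper's: it sidesteps the need to certify that each $H_{i+1}'$ is strictly larger than $H_i'$ (which in the paper's non-cumulative formulation requires $1\in\chi_{T'}^{-1}(H_i')$ and is not spelled out), replacing the induction with a single clean double count on a cut. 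Your closing observation is also right: $d_m<d_m'$ follows from $\sum d_i=\sum d_i'$ and the other degree inequalities, and the hypothesis that $0'$ is adjacent to $m$ in $T'$ is never invoked, so the lemma holds under the slightly weaker assumptions you actually use.
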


\begin{proof}

Let $H$ be a subset of right vertices of $G$ such that $\bar{n} \not \in N_{T'}(H)$. Using Lemma~\ref{lem:lrtree}, we get $|N_T(N_{T'}(H))| \geq LD_T(N_{T'}(H)) + 1$. And from the way that $T$ and $T'$ was constructed, we get $LD_T(N_{T'}(H)) \geq LD_{T'}(N_{T'}(H))$. By applying Lemma~\ref{lem:lrtree} again, we get $LD_{T'}(N_{T'}(H)) \geq |H|$, from which we can conclude that $|N_T(N_{T'}(H))| > |H|$. Notice that if $N_{T'}(H)$ contains $\bar{1}$, then we have $LD_T(N_{T'}(H)) > LD_{T'}(N_{T'}(H))$, and hence we get $|N_T(N_{T'}(H))| > |H| + 1$.

Assume that $T,T'$ are compatible for the sake of contradiction. We have an edge $(\bar{n},0)$ in $T$ and $(\bar{1},0)$ in $T'$. Denote $H_1$ to be $N_T(\bar{1}) \setminus \{0\}$. To prevent an alternating cycle of length greater than $4$ in $U(T,T')$, we have $\bar{n} \not \in N_{T'}(H_1)$. According to the argument in the previous paragraph, $H_2 := N_T(N_{T'}(H_1)) \setminus \{0\}$ is strictly larger than $H_1$. And again to prevent an alternating cycle of length greater than $4$ in $U(T,T')$, we have $\bar{n} \not \in N_{T'}(H_2)$. By repeating this procedure, setting $H_{i+1}$ to be $N_T(N_{T'}(H_i)) \setminus \{0\}$ in each step, this goes on and on, contradicting the fact that number of vertices in $G$ is finite.
\end{proof}

\section{Lattice points of $P_{\M}$ and bases of $\M$.}

In this section, given a transversal matroid $\M$, we construct a generalized permutohedron $P_{\M}$ from it. Moreover, we show that any fine mixed subdivision of $P_{\M}$ induces a bijection between the bases of $\M$ and lattice points of $P_{\M}$ lying inside the region satisfying $x_i \geq 1$ for all $i \in [r]$.

Let $\M$ be a transversal matroid of rank $r$ over the base set $[\bar{n}] = \{\bar{1},\ldots,\bar{n}\}$. Then by Lemma~\ref{lem:nicepres}, there is a bipartite graph that gives a presentation of $\M$, and is a subgraph of the complete bipartite graph $K_{n,r}$. As before, we label the set of left vertices by $\bar{1},\ldots,\bar{n}$ and label the set of right vertices by $1,\ldots,r$. Now we add a vertex labeled $0$ and connect it to all left vertices to get a new bipartite graph $G$. Then we define $P_{\M}$ to be the generalized permutohedron $P_G$. 

As before, we use $I_1,\ldots,I_n$ to denote $N(\bar{1}),\ldots,N(\bar{n})$. One property to keep an eye on is that $0$ is contained in all of those sets. Our strategy for showing Stanley's conjecture is to assign a bijection between bases of $\M$ and lattice points of $P_{\M}$ that satisfy $x_1,\ldots,x_r \geq 1$. 

Given a generalized permutohedron $P$, let $p$ be an integer lattice point of $P$. We say that $p$ is a \newword{base point} of $P$ if $p + \epsilon \mu$ is a point inside $P$ for a very small positive number $\epsilon$, where $\mu$ is defined to be $ re_0 - \sum_{i=1}^r e_i$.

\begin{lemma}
Base points of $P_{\M}$ are exactly the integer lattice points of $P_{\M}$ that satisfy $x_1,\ldots,x_r \geq 1$.
\end{lemma}
\begin{proof}
All summands of $P_{\M}$ are simplices which contain the vertex $e_0$. Let $p_0$ be the unique vertex of $P_{\M}$ given by the coordinates $(n,0,\ldots,0)$. By Remark~\ref{rem:facet}, each facet surrounding $p_0$ is on a hyperplane $x_i=0$ for some $i \in [r]$. The integer lattice points of $P_{\M}$ that are not base points, are exactly the points on those facets.
\end{proof}

Let $\prod_{\J} = \Delta_{J_1} \times \cdots \times \Delta_{J_n}$ be some Minkowski cell inside a mixed subdivision of $P_{\M}$. 

\begin{definition}
If for all $i \in [n]$, we have $|J_i| \leq 2$, we say that $\prod_{\J}$ is \newword{zonotopal}. 
\end{definition}

It is easy to see that if $\prod_{\J}$ is a zonotopal cell, then $\prod_{\J}$ is actually a zonotope.

\begin{lemma}
\label{lem:zono}
A Minkowski cell $\prod_{\J}$ inside a mixed subdivision of $P_{\M}$ contains a base point of $\prod_{\J}$ if and only if $\prod_{\J}$ is zonotopal.
\end{lemma}

\begin{proof}
We first show that if $\prod_{\J}$ is zonotopal, it contains a base point of $\prod_{\J}$. We construct a subgraph $T$ of $K_{r+1}$ by collecting the edges $(a,b)$ for each $J_i = \{a,b\}$. $T$ is a spanning tree since $G_{\J}$ is a spanning tree of $K_{n,r+1}$. Think of $T$ as a rooted tree having $0$ as the root. For each $J_i = \{a,b\}$, where $b$ is a descendant of $a$, set $q_i$ to be $e_a$ and $p_i$ to be $e_b$. And let $l_i$ denote the number of descendants of $a$ inside $T$. Then consider the point $p = \sum p_i$. We will show that this is a base point of $\prod_{\J}$. For each $i \in [n]$, the point $p_i + l_i \epsilon(q_i - p_i)$ is inside $\Delta_{J_i}$. Therefore, we can conclude that $p + \epsilon \sum l_i (q_i-p_i) = p + \epsilon \mu$ is a point inside $\prod_{\J}$.

We now show that for $\prod_{\J}$ to contain a base point, $\prod_{\J}$ has to be zonotopal. Let $p = p_1 + \cdots + p_n$ be the base point of $\prod_{\J}$, where $p_i \in \Delta_{J_i}$. The point $p$ being a base point implies that we can decrease the value of $a$-th coordinate from $p$ and still stay in $\prod_{\J}$ for all $a \in [r]$. In order for this to be true, for each $a \in [r]$, there has to exist $b \in [n]$ such that we can decrease the value of $a$-th coordinate from $p_b$ and still stay in $\Delta_{J_b}$. But given any $p_b \in \Delta_{J_b}$, there is exactly one coordinate $a \in [r]$ where we can decrease its value and still say in $\Delta_{J_b}$ if $|J_b| \geq 2$, and none otherwise. Therefore, we need at least $r$ $J_i$'s having cardinality $\geq 2$, and this implies that $\prod_{\J}$ is zonotopal.

% For each $p_k \in \Delta_{J_k}$, we set $A_k$ to be the collection of $j \in [r]$ such that there exists some positive numbers $c_i$'s that satisfy:
%\begin{itemize}
%\item $c_j=0$,
%\item $\sum c_i = 1$ and
%\item $p_k - \epsilon e_j + \epsilon( \sum_{i=0}^n c_ie_i) \in \Delta_{J_k}$.
%\end{itemize}

%In order for $p + \mu$ to be in $\prod_{\J}$, we must have $\cup A_k = [r]$. But no matter what the size of $J_k$ is, the cardinality of $A_k$ cannot exceed $1$. Since we have $|A_k|=0$ if $|J_k|=1$, we need at least $r$ $J_i$'s having cardinality greater than $1$, which implies that $\prod_{\J}$ is zonotopal.

%For $p +  \mu$ to be in $P$, we need to have the following condition satisfied: for each $j \in [r]$, there exists $k \in [n]$ and some positive numbers $c_i$'s satisfying $c_j=0$, $\sum c_i = 1$ and $p_k - \epsilon e_j + \epsilon( \sum_{i=0}^n c_ie_i) \in \Delta_{J_k}$. In order to check if this is satisfied, for each $p_k \in \Delta_{J_k}$, we set $T_k$ to be the collection of $j \in [r]$ such that there exists some positive numbers $c_i$'s such that 
\end{proof}

\begin{remark}
\label{rem:basecoord}
From the above proof, it is easy to see that the coordinate of the base point is only affected by $J_i$'s such that $|J_i| = 1$. More precisely, the coordinate of the point is given by $(n-r-x_1-\cdots-x_r,x_1+1,\ldots,x_r+1)$, where $x_k$ counts the number of times $k$ appears among $J_i$'s having cardinality $1$.
\end{remark}

\begin{proposition}
\label{prop:bij}
There is a bijection between base points of $P_{\M}$ and bases of $\M$.
\end{proposition}
\begin{proof}
Given a fixed fine mixed subdivision of $P_{\M}$, Proposition~\ref{prop:Gcell} and Remark~\ref{rem:halldragon} tells us that there is a bijection between zonotopal cells of $P_{\M}$ and bases of $\M$. All we need to show is that every base point of $P_{\M}$ is a base point of some zonotopal cell.

The facets of possible cells of $P_{\M}$ are of form $\sum_{i \in I} x_i = z_I$ for some subset $I$ of $\{0\} \cup [r]$. This means that none of the facets are parallel to the vector $\mu$, which implies that $p + \epsilon\mu$ is in the interior of some cell which contains $p$ on its hull. This cell has to be zonotopal by Lemma~\ref{lem:zono}.
\end{proof}

We have seen that each fine mixed subdivision of $P_{\M}$ induces a bijection between base points of $P_{\M}$ and bases of $\M$. In the next section, we come up with a fine mixed subdivision such that $n-r-x_0$ of a base point equals the externally passive degree of the corresponding base in $\M$.

%\begin{lemma}
%Let $H_1,\ldots,H_k$ be some subsets of $[n]$. Then $H_{1}^{a_1},\ldots,H_{k}^{a_k}$ satisfies the Hall's condition if and only if $ (\{0\} \cup H_1)^{a_1},\ldots,(\{0\} \cup H_k)^{a_k}$ satisfies the Dragon marriage condition.
%\end{lemma}

%Let $\epsilon$ be a very small number, and define the vector $\mu$ to be $r\epsilon e_0 -\epsilon(e_1 + \cdots + e_r)$. We say that a lattice point $p$ inside a convex polytope $P$ is a \newword{good lattice point} of $P$, if $p + \mu$ is still inside $P$. 

\section{Lexicographical subdivision of $P_{\M}$.}

In this section, we want to find a fine mixed subdivision of $P_{\M}$ such that if we use the bijective map defined in the previous section to associate the bases to the lattice points of $P_{\M}$, the externally passive degree can be read off by looking at the sum of all coordinates except $0$.

We use the fact that the fine mixed subdivision of a generalized permutohedron is related to the triangulations of certain polytopes via the \newword{Cayley trick}. Let $e_{\bar{1}},\ldots,e_{\bar{n}},e_{0},e_{1},\ldots,e_{r}$ be the standard basis of $\RR^{n+r+1}$. Embed the space $\RR^{r+1}$ where the polytopes $\Delta_I$ live for $I \subseteq [\hat{r}]$. As before, let $I_i$ denote the collection $j$'s such that $(\bar{i},j)$ is an edge of $G$. The root polytope $Q_G$ is defined as the convex hull of the vertices $e_{\bar{i}} + e_j$'s, for each edge $(\bar{i},j)$ of $G$.
 %Each vertex of $Q_G$ is of form $e_i \times e_{\bar{j}}$, which we denote as the $(i,\bar{j})$-vertex.

\begin{Lemma}[\cite{Po}]
Fine mixed subdivisions of $P_G$ are in one-to-one correspondence with triangulations of $Q_G$. A fine mixed cell in $P_G$ given by $\Delta_{J_1} \times \cdots \times \Delta_{J_n}$ corresponds to a simplex which has vertices $e_{\bar{i}} + e_j$ for each pair $(\bar{i},j)$ satisfying $j \in J_i$.
\end{Lemma}

Let $P_G$ be a generalized permutohedron and $P_{G'}$ be $P_G + \Delta_J$. In other words, $G'$ is a bipartite graph obtained by adding a left vertex $v$ with neighborhood $J$ to the bipartite graph $G$. Start from a triangulation of $Q_G$. This naturally induces a triangulation on the cone formed by $e_v + e_0$ inside $Q_{G'}$. This cone is a subpolytope of $Q_{G'}$, so we can extend the triangulation of the cone to a triangulation of $Q_{G'}$. We say that such triangulation of $Q_{G'}$ is obtained by extending the triangulation of $Q_G$ in direction $0$. And for the corresponding mixed subdivisions, we say that the mixed subdivision of $P_{G'}$ is obtained by extending the mixed subdivision of $P_G$ in direction $0$. One can see that for each cell $\prod_{T}$ in the mixed subdivision of $P_G$, $\prod_{T} + \Delta_{\{0\}}$ is a cell inside the extended mixed subdivision of $P_{G'}$. We will use this extension method to define a \newword{lexicographical subdivision} of $P_G$.

Start from a Minkowski sum $\Delta_{\{0,1\}} + \cdots + \Delta_{\{0,r\}}$. We use $X_0$ to denote this polytope. Now use $X_i$ to denote the sum $X_0 + P_i$, where $P_i$ denotes the sum $\Delta_{I_1} + \cdots + \Delta_{I_i}$. We start from a subdivision of $X_0$, which is unique, and repeat the process of extending the subdivision in direction $0$ to obtain a subdivision of $X_i$ for each $i \in [n]$. We call this the lexicographical subdivision of $X_i$.

\begin{lemma}
\label{lem:0con}
Let $\prod_T$ be a cell inside the lexicographical subdivision of $X_i$, for $i \geq 1$. Then $0 \in T_{r+i}$. 
\end{lemma}
\begin{proof}
We will use induction on the size of $|T_{r+i}|$. When $|T_{r+i}| = 1$, the cell $\prod_T$ has left-degree vector $(|T_1|-1,\ldots,|T_{r+i-1}|-1,0)$. Proposition~\ref{prop:Gcell} implies that there is some cell $\prod_{T'}$ that has left degree vector $(|T_1|-1,\ldots,|T_{r+i-1}|-1)$ in $X_{i-1}$. From the definition of lexicographical subdivision, there is a cell corresponding to the tree $(T'_1,\ldots,T'_{r+i-1},\{0\})$ in $X_i$. Since this cell has the same left degree vector as $\prod_T$, Proposition~\ref{prop:Gcell} tells us that $T_{r+i} = \{0\}$.

Now assume for the sake of induction that $0 \in J_{r+i}$ for all cells $\prod_{\J}$ such that $|J_{r+i}| < |T_{r+i}|$. There is some $q \in T_{r+i}$ such that by crossing the facet $\Delta_{T_1} + \cdots + \Delta_{T_{r+i} \setminus \{q\}}$, we reach another cell in $X_i$. If we denote this cell as $\prod_S$, we have $|S_{r+i}| < |T_{r+i}|$ due to Proposition~\ref{prop:Gcell}. And by induction hypothesis, we have $0 \in S_{r+i}$. Since $T_{r+i}  \setminus \{q\} = S_{r+i}$, we get $0 \in T_{r+i}$.
\end{proof}

Let $\prod_{T}$ be a cell inside a lexicographical subdivision of $X_i$, that intersects the region $0 < x_j < 1$ and does not lie inside $X_{i-1} + \Delta_{\{0\}}$. Writing $T = (T_1,\ldots,T_{r+i})$, we can see that $T_j \not = \{j\}$, since otherwise the cell will not intersect with the region $0 < x_j <1$. And by comparing this cell to the cell $\prod_{T'}$ which is given by $\Delta_{\{0,1\}} + \cdots + \Delta_{\{0,r\}} + \Delta_{\{0\}} + \cdots + \Delta_{\{0\}}$, we can see that $j \not \in T_k$ for $k>r$, since otherwise we have a length $4$ alternating cycle in $U(T,T')$. But since $T$ is a spanning tree, some $T_i$ has to include $j$, which implies that $j \in T_j$. Therefore, we can conclude that $T_j = \{0,j\}$ and that the cell $\prod_T$ is included in the region $0 \leq x_j \leq 1$.

 %and using Lemma~\ref{lem:utt}, we get $T_j = \{0,j\}$ where $T = (T_1,\ldots,T_{r+i})$. And since $\prod_{T}$ intersects the region $0 < x_j <1$, $j \not \in T_k$ for $k \not = j$. This implies that $\prod_{T}$ is included in the region $0 \leq x_j \leq 1$. 

\begin{remark}
\label{rem:lexsub}
Inside the lexicographical subdivision of $X_i$, no cell crosses $x_j = 1$ for each $j \in [r]$. Each cell $\prod_{T}$ of $X_i$ inside the region $x_i \geq 1$ for all $i\in [r]$ has $T_i = \{i\}$ for $i \in [r]$.
\end{remark}

Since no cell crosses $x_j = 1$ inside the lexicographical subdivision of $X_i$, we can cut the lexicographical subdivision of $X_i$ via $x_j \geq 1$ for all $j \in [r]$ to get a mixed subdivision of $P_i$. When $i=n$, we call this the lexicographical subdivision of $P_G$.

%Now we are going to define something called a \newword{lexicographical triangulation} of $Q_G$. Recall that $m \geq r$, all $I_i$'s contain $\bar{0}$, and we reordered vertices $1,\ldots,m$ such that $I_1 \cup \cdots \cup I_r$ contains $[\hat{r}]$. Let $G_i$ be the subgraph of $G$ by restricting to vertices $1,\ldots,i,\bar{0},\ldots,\bar{r}$. We set $t$ to be the smallest number such that $I_1 \cup \cdots \cup I_t = [r]$. Then $P_{G_t}$ is full dimensional (has dimension $r$) and contains some cells.

%Start from any triangulation of $Q_r$. Given a triangulation of $Q_k$ for $k \geq r$, it naturally induces a triangulation on the cone formed by $e_{k+1} \times e_{\bar{0}}$ and $Q_k$. Consider a triangulation of $Q_{k+1}$ which contains all such simplices. Repeat this procedure till we get a triangulation of $Q_G$.

%We call the fine mixed subdivision of $P_G$ corresponding to the canonical triangulation of $Q_G$ via the Cayley trick as the \newword{canonical subdivision} of $P_G$.

%\begin{prop}
%Lexicographical subdivision has the following properties:
%\begin{enumerate}
%\item $P_r$ has same dimension as $P_G = P_n$,
%if we have a fine mixed cell $\Delta_{J_1} + \cdots + \Delta_{J_i}$ inside the lexicographical subdivision $P_i$, $\Delta_{J_1} + \cdots + \Delta_{J_i} + \Delta_{\bar{0}}$ is a fine mixed cell inside the canonical subdivision of $P_{i+1}$.
%\end{enumerate}
%\end{prop}

Now we wish to show that given a lexicographical subdivision of $P_{\M}$, and using the bijection given via Proposition~\ref{prop:bij}, the value $x_1+\cdots+x_r-r$ of a base point equals the externally passive degree of the corresponding base in $\M$. To do this, we first need to define some notations. We will use $H_1,\ldots,H_n$ to denote $N_G(\bar{1}) \setminus \{0\},\ldots,N_G(\bar{n}) \setminus \{0\}$. Given a base $B = \{b_1 < \cdots < b_r \}$ of $\M$, we call the collection of sets $H_{b_1},\ldots,H_{b_r}$ to be the \newword{type sequence} of $B$. Given a collection $\HH^a = \{H_1^{a_1},\ldots,H_n^{a_n}\}$ that satisfies hall's condition, we denote $EP_{\M}(\HH^a)$ to denote the collection of $H_i$'s such that there exists $j<i$ for which $\HH^a \setminus \{H_i\} \cup \{H_j\}$ satisfies the hall's condition. Beware that the collection is considered as a multiset: for example, $\{H_1^2,H_2^0,H_3^1\} \cup \{H_1\} = \{H_1^3,H_2^0,H_3^1\}$.

\begin{remark}
Let the type sequence of $B \in \M$ be $\HH^a$. Then the point corresponding to $B$ via the bijection in Proposition~\ref{prop:bij} is the base point of a cell having left degree vector given by $a$.
\end{remark}

Now we show that given a cell $\prod_{\J}$ with left degree vector $a$ inside the lexicographical subdivision of $P_{\M}$, there is a connection between whether $0$ is in $J_i$ or not and whether $H_i$ is a member of $EP_{\M}(\HH^a)$ or not.

\begin{lemma}
\label{lem:main}
Let $\M$ be a transversal matroid and $P_{\M}$ be its corresponding generalized permutohedron. Look at a Minkowski cell $\prod_{\J}$, where we write $\J = \{J_1,\ldots,J_n\}$, with left degree vector $a$ inside the lexicographical subdivision of $P_{\M}$. We have $0 \not \in J_i$ if and only if $H_i \in EP_{\M}(\HH^a)$.

\end{lemma}

\begin{proof}

We first show that it is enough to show the claim for $X_n$, which was used to define lexicographical subdivision. Let $G'$ be the bipartite graph corresponding to $X_n$. It has left vertices $\bar{1},\ldots,\overline{r+n}$, where each vertex $\bar{i}$ for $i \leq r$ is connected to right vertices $0$ and $i$, and each vertex $\overline{r+i}$ for $i \leq n$ is connected to $I_i$. We set $\M'$ to be the transversal matroid which is represented by $G'$, and set $\KK = (\{1\},\ldots,\{r\},H_1,\ldots,H_n)$. It is easy to see that $H_i \in EP_{\M}(\HH^a)$ if and only if $H_i = K_{i+r} \in EP_{\M}(\KK^{a'})$ where $a' = (0,\ldots,0,a_1,\ldots,a_n)$. Combining this with Remark~\ref{rem:lexsub}, we can conclude that to prove the lemma, it is enough to show for $\M'$ and $X_n$ instead. 

We start with $X_0$. Since the only cell is $X_0$ itself, it is easy to see that the claim holds. For the sake of induction, assume that the claim holds for $X_0,\ldots,X_{q-1}$. This means that the claim holds for cells of $X_q$ with $a_q = 0$. Again, assume for the sake of induction that the claim holds for cells of $X_q$ with left degree vector given by $d$, where $d_q < a_q$. 

 Set $a'$ to be obtained from $a$ by negating $1$ from $a_q$. Given the collection $\HH^{a'}$, use $Q_a$ to denote the largest subset $Q$ of $[q-1]$ such that there exists exactly $|Q|$ subsets of $Q$ inside the collection. Such $Q_a$ is well defined due to the following reasoning: if $A$ and $B$ are such sets that do not contain each other, there are at least $|A \setminus A \cap B|$ number of subsets of $A$ not contained in $B$. So there are at least $|A \cup B|$ number of subsets of $A \cup B$, but this number cannot exceed $|A \cup B|$ since the collection $I^a$ satisfies Hall's marriage condition. Hence if $A$ and $B$ are two sets that satisfy the condition, than $A \cup B$ also satisfies the condition.

%We need to show that the claim holds for cells that were added in $X_q$, and will assume that $\prod_{\J}$ is one of them. 

For $i < q$, if $H_i \not \subseteq Q_a$, the collection $\HH^a \setminus \{H_q\} \cup \{H_i\}$ satisfies Hall's marriage condition, and hence $H_i \in EP_{\M}(\HH^a)$. To see this, for the sake of contradiction, assume there is some distinct $i,i_1,\ldots,i_s$ such that $|H_i \cup H_{i_1} \cup \cdots \cup H_{i_s}| = s$. This implies that $H_i \subseteq H_i \cup H_{i_1} \cup \cdots \cup H_{i_s}$ and $|H_{i_1} \cup \cdots \cup H_{i_s}| = s$.  It follows that $H_i \subseteq Q_a$, which gives us a contradiction.

Therefore, we get some sequence $b$ such that:
\begin{itemize}
\item $b_q = a_q -1$,
\item $b_i = a_i + 1$,
\item $b_j = a_j$ for $j \not = i,q$,
\item $\HH^b$ satisfies the Hall marriage condition.
\end{itemize}

There exists a cell $\prod_{\CCC}$ with left degree vector $b$ due to Remark~\ref{rem:halldragon} and Proposition~\ref{prop:Gcell}. By induction hypothesis, the claim holds for $\prod_{\CCC}$. Using Lemma~\ref{lem:infoconn}, we get $0 \not \in J_i$ since we have $0 \in J_q,C_q$ due to Lemma~\ref{lem:0con}. Hence we only need to consider $H_i$'s contained in $Q_a$.

By Remark~\ref{rem:facet}, we can cross one of the facets of $\prod_{\J}$ is given by $\Delta_{J_1} + \cdots + \Delta_{J_q \setminus \{i\}}$ to get another cell inside $X_q$. By crossing this facet, we reach a cell $\prod_{\CCC}$ with left degree vector $c$ such that $c_q = a_q -1$. The claim holds for $\prod_{\CCC}$ due to induction hypothesis, and for $\HH^c$ to also satisfy Hall's marriage condition, we need to have $\{H | H \in \HH^a, H \subseteq Q_a\} = \{H | H \in \HH^c, H \subseteq Q_a\}$. This means that:

%There exists an adjacent cell $\prod_c$ with left degree vector $c$ such that $c_q = a_q - 1$. This cell can be approached by crossing the facet of $\prod_{\J}$ given by $\Delta_{J_1} + \cdots + \Delta_{J_q \setminus \{i\}} + \cdots \Delta_{J_n}$, where $i \not = 0, i \in J_q$. Since the cells are adjacent, the corresponding bipartite graphs differ by one edge. And for the collection $\I^c$ to satisfy Hall's marriage condition, we need to have $\{I | I \in \I^a, I \subseteq H_a\} = \{I | I \in \I^c, I \subseteq H_a\}$.
\begin{itemize}
\item $a_i = c_i$ for all $i$ such that $H_i \subseteq Q_a$,
\item we have $C_i = J_i$ for all $i$ such that $H_i \subseteq Q_a$.
\item for $H_i \subseteq Q_a$, we have $H_i \in EP_{\M}(\HH^a)$ if and only if $H_i \in EP_{\M}(\HH^c)$.
\end{itemize}

Since the lemma holds for $\prod_{\CCC}$, we have also proven the claim for $\prod_{\J}$ in this case too. Hence by induction, we have shown that the claim holds for all cells inside the lexicographical subdivision of $X_q$. And again by induction, we have shown that the statement is true for $X_n$, and from the arugment in the first paragraph, the statement holds for $P_{\M}$.

%in $P_{\M}^q$ such that $a'_q = a_q-1$. Their corresponding spanning trees differ by one edge. When $a_q =1$, there exists an adjacent cell $\I^{a'}$ in $P_{\M}^{q-1}$ that satisfies the same properties. Set $H$ as $cm(\I^{\breve{a}})$. Then due to Hall's condition, $\I^a|_H$ = $\I^{a'}|_H$ and we get $EP_{\M|_H}(\I^a|_H) = EP_{\M|_H}(\I^{a'}|_H)$. The corresponding spanning trees are the same for all edges adjacent to vertices on the left side corresponding to $I \subseteq H$. Therefore, we can see that if the lemma holds for $\I^{a'}$, it would hold for $\I^a$ too. 

%Now we have : if the lemma holds for all cells of $P_{\M}^i$, then it holds for all cells of $P_{\M}^{i'}$ where $i < i'$. For our purpose, we may assume $l_{\M,\{i\}} > 0 $ for all $i \in [r]$. This is because all cells of the original polytope (possibly with some $l_{\M,\{i\}} = 0$) will simply be translated in the canonical mixed subdivision of the new polytope (with all $l_{\M,\{i\}} > 0$ for $i \in [r]$). If we look at $P_{\M}^r$, we only have one cell with type sequence $a = (1,\cdots,1,0,\cdots)$. For this cell, we have $E(\I^a) = \emptyset$ and each $J_i$ for $1 \leq i \leq r$ is a line containing the origin. So the lemma holds for $P_{\M}^r$, and this finishes the proof.
\end{proof}

Let $B$ be a base in $\M$ and $p = (c_0,\ldots,c_r) \in P_{\M}$ be the corresponding base point via Proposition~\ref{prop:bij}. Combining Remark~\ref{rem:basecoord} and Lemma~\ref{lem:main}, we can see that $c_1+\cdots+c_r-r = e_{\M}(B)$.

For each base point at $(c_0,c_1,\cdots,c_r)$, let's make a monomial ${x_1}^{c_1-1} \cdots {x_r}^{c_r-1}$. Then we get a pure monomial order ideal of which Stanley's conjecture is asking for. 

\begin{proposition}
Let $\M$ be a cotransversal matroid. We denote $\M^{*}$ for the dual matroid, which is in this case a transversal matroid. For each base lattice point $(c_1,\cdots,c_r)$ in $P_{\M^{*}}$, take a monomial ${x_1}^{c_1-1} \cdots {x_r}^{c_r-1}$ to form a collection $X$. Then $X$ is a pure monomial order ideal and its degree sequence equals the h-vector of $\M$.
\end{proposition}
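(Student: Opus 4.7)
The plan is to combine Proposition~\ref{prop:main} with the explicit facet description of $P_{\M^*}$ from Section~\ref{sec:transversalhedra}. For the degree-sequence claim, Proposition~\ref{prop:main} puts the good lattice points of $P_{\M^*}$ in bijection with bases of $\M^*$, and if $B \in \M^*$ corresponds to $p = (c_1, \ldots, c_r)$, then the monomial $x_1^{c_1-1} \cdots x_r^{c_r-1}$ has degree $(c_1 + \cdots + c_r) - r = d_{\M^*}(B) - r = ep_{\M^*}(B)$. By Remark~\ref{rem:dual}, $h_i(\M)$ is the number of bases of $\M^*$ with $ep_{\M^*}(B) = i$, so the degree enumerator of $X$ equals the $h$-vector of $\M$.

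To show $X$ is a monomial order ideal, recall that the facets of $P_{\M^*}$ are the coordinate hyperplanes $x_i = 0$ together with the bounds $x_I = L_I$, where $L_I := \sum_{H : I \cap H \neq \emptyset} l_{\M^*, H}$. Suppose $x_1^{c_1-1} \cdots x_r^{c_r-1} \in X$, so $p = (c_1, \ldots, c_r)$ is a good lattice point, and let $m' = x_1^{d_1-1} \cdots x_r^{d_r-1}$ divide it, so $1 \leq d_i \leq c_i$ for each $i$. Then $\sum_{i \in I} d_i \leq \sum_{i \in I} c_i \leq L_I$ for every $I \subseteq [r]$, hence $(d_1, \ldots, d_r) \in P_{\M^*}$; and since all coordinates are at least $1$, Proposition~\ref{prop:goodpoints} identifies it as a good lattice point, so $m' \in X$.

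The heart of the argument is purity. Call $I \subseteq [r]$ \emph{tight at $p$} if $c_I = L_I$, and let $T$ be the collection of tight subsets. Incrementing $c_i$ preserves membership in $P_{\M^*}$ precisely when no $I \in T$ contains $i$, so the monomial at $p$ is maximal in $X$ iff $\bigcup_{I \in T} I = [r]$. The key identity is that $T$ is closed under union: a direct computation with the formula for $L_I$ yields
\[ L_{I_1} + L_{I_2} - L_{I_1 \cup I_2} = \sum_H l_{\M^*, H}\, [I_1 \cap H \neq \emptyset \text{ and } I_2 \cap H \neq \emptyset] \geq L_{I_1 \cap I_2}, \]
so the inclusion-exclusion identity $c_{I_1 \cup I_2} + c_{I_1 \cap I_2} = c_{I_1} + c_{I_2}$, together with $c_{I_1 \cap I_2} \leq L_{I_1 \cap I_2}$ and the assumption $c_{I_j} = L_{I_j}$ for $j = 1, 2$, forces $c_{I_1 \cup I_2} \geq L_{I_1 \cup I_2}$, hence equality. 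Therefore, when the monomial at $p$ is maximal, $[r] \in T$, so $c_{[r]} = L_{[r]}$ and every maximal monomial has the common degree $L_{[r]} - r$.

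The main obstacle I expect is the submodularity-type inequality relating $L_{I_1} + L_{I_2}$ to $L_{I_1 \cup I_2} + L_{I_1 \cap I_2}$; once it is extracted from the defining formula for $L$, union-closure of tight sets is immediate, and without it there is no evident reason for all maximal lattice points to share the same coordinate sum.
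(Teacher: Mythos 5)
Your proof is correct, and the degree-sequence and order-ideal parts follow essentially the same lines as the paper. The genuinely different piece is the purity argument. The paper argues by convexity: it asserts that a maximal lattice point lying on a hyperplane $x_{[r]}=c$ with $c<L_{[r]}$ would force the incident facets into the halfspace $x_{[r]}\le c$, contradicting convexity when compared with points on the facet $x_{[r]}=L_{[r]}$. That sketch is terse and leaves the reader to supply why maximality of the lattice point controls the geometry of an entire face. You instead observe that $L_I=\sum_{H:I\cap H\neq\emptyset} l_{\M^*,H}$ is submodular by a direct indicator-function computation, that consequently the collection of tight constraints at any point is closed under union, and that maximality of the monomial is exactly the condition that tight sets cover $[r]$. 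Union-closure then gives $[r]$ tight, pinning the degree at $L_{[r]}-r$. This amounts to recognizing $P_{\M^*}$ (intersected with the orthant) as a polymatroid and invoking the standard lattice property of tight sets; it is more explicit, localizes exactly where submodularity is needed, and arguably fills a gap in the paper's convexity sketch. Both routes land at the same place; yours trades the paper's brevity for a self-contained combinatorial verification.
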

\begin{proof}
We first show that $X$ is a monomial order ideal. Let $(c_0,\ldots,c_r)$ be a point in $P = P_{\M^{*}}$. Let $G$ be the corresponding bipartite graph of $P$. Now consider a subgraph we obtain by deleting the right vertex $i$. This gives us a polytope, with one less dimension, and contains $(c_0,\ldots,0,\ldots,c_r)$ as a point, which is obtained from $(c_0,\ldots,c_r)$ by setting $c_i$ to $0$. This point is also inside $P$, so this implies that $(c_0,\ldots,c_i-1,\ldots,c_r)$ is also inside $P$. This proves that $X$ is a monomial order ideal.

Now let's show that $X$ is pure. Recall that by Proposition~\ref{prop:gplatticepoints}, each lattice point of $P$ is of form $p_1 + \cdots + p_n$, where $p_i$ is a lattice point of $\Delta_{I_i}$. The point $p_1 + \cdots + p_n$ corresponds to a maximal monomial if and only if $p_i \not = e_0$ for all $i \in [n]$. This implies that all such points are on the hyperplane $x_1 + \cdots + x_r = n$, from which we can conclude that the corresponding monomials have the same degree.

%The fact that $X$ is a monomial order ideal follows from Proposition~\ref{prop:goodpoints}. The degree of a monomial corresponding to a lattice point in $P_{\M^{*}}$ can also be thought as which hyperplane $x_{[r]} = c$ it lies in. We know that the lattice points on the facet $x_{[r]} = \sum l_{\M,I_i}$ correspond to maximal monomials. Assume $X$ is not pure. So we have a lattice point that corresponds to a maximal monomial but lies on hyperplane $x_{[r]} = c <  \sum l_{\M,I_i}$. All the facets of $P_{\M^*}$ that this point lies in should be contained in the region $x_{[r]} \leq c$, and the only intersection with $x_{[r]} = c$ should be this point. But then if we consider this point and points on $x_{[r]} = \sum l_{\M,I_i}$, it contradicts the convexity of $P_{\M^*}$. So $X$ is a pure monomial order ideal.

%Put a canonical subdivision on $P_{\M^*}$ and label the good lattice points with bases using the bijection we obtained before. From Remark~\ref{rem:dual}, each $h_i$ in the h-vector $(h_0,\cdots,h_{r})$ of $\M$ can be obtained by counting the number of $B \in \M^{*}$ such that $ep_{\M^*}(B) = i$. Each $d_i$ in the degree sequence $(d_0,\cdots,d_{r})$ of $X$ can be obtained by counting the number of $B \in M^{*}$ such that $d_{\M^*}(B) = r+i$. And we know $ep_{\M^*}(B) = d_{\M^*}(B) -r$ by the previous preposition.
\end{proof}

This implies Stanley's conjecture for cotransversal matroids.

%We associate a generalized permutohedron $P_{\M}$ to $\M$ via
%$$P_{\M} := \sum \Delta_{\{0\} \cup A_i}.$$

%Another way to think of this is as the following : Given a graph $G$ that represents $\M$, add a vertex on the right side labeled $0$, and connect it to all vertices on the left side. If we denote this graph by $\hat{G}$, then $P_{\M}$ is equal to $P_{\hat{G}}$.

\begin{figure}[htb!]
\centering%
\includegraphics[width=0.3\textwidth]{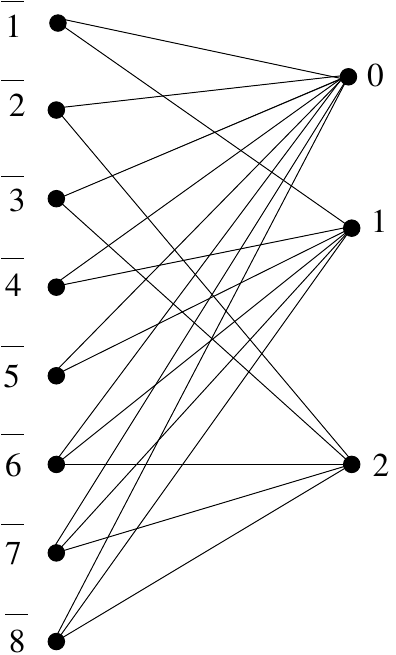}
\caption{Padding of the graph given in Figure~\ref{fig:fig1}.}
\label{fig:fig2}
\end{figure}

Let's look at an example.  We look at a transversal matroid $\M$ given by the bipartite graph in Figure~\ref{fig:fig1}. The padded bipartite graph is given Figure~\ref{fig:fig2}, and we construct a generalized permutohedron from it. For convenience, we will project down to $x_0=0$ to draw the polytope in the $x_1,x_2$-plane. 

First lets look at the cell that lies on the southwest corner. The corresponding summand is given by $\Delta_{0,1} + \Delta_{0,2} + \Delta_{0} + \cdots + \Delta_{0}$. The left-degree vector is given by $(1,1,0,0,0,0,0,0)$ and our bijection assigns the base point of this cell to the base $\{\bar{1},\bar{2}\}$. 

Now consider the leftmost triangle. The corresponding summand is given by $\Delta_{1} + \Delta_{2} + \Delta_{2} + \Delta_{1} + \Delta_{1} + \Delta_{0,1,2} + \Delta_{0} + \Delta_{0}$. This cell is not zonotopal, and there is no base assigned to the cell. If we look at the cell to the top of it, the summand is given by $\Delta_{1} + \Delta_{2} + \Delta_{2} + \Delta_{1} + \Delta_{1} + \Delta_{1,2} + \Delta_{0,2} + \Delta_{0}$. The left-degree vector is given by $(0,0,0,0,0,1,1,0)$, and the base point of the cell is assigned to the base $\{\bar{6},\bar{7}\}$.

\begin{figure}
\centering
\includegraphics[width=0.4\textwidth]{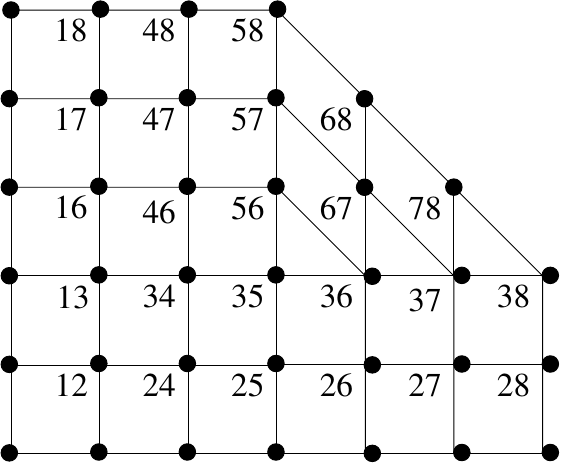}%
\qquad
\begin{tabular}{|c|c|c|}
	\hline
$B$ &  Externally passive elements  \\
	\hline
12   &  $\emptyset$\\
13   &  2\\
16   &  2,3\\
17   &  2,3,6\\
18   &  2,3,6,7\\
24   &  1\\
25   &  1,4\\
26   &  1,4,5 \\
27   &  1,4,5,6\\
28   &  1,4,5,6,7\\
34   & 1,2\\
35   & 1,2,4\\
36   & 1,2,4,5\\
37   & 1,2,4,5,6\\
38   & 1,2,4,5,6,7  \\  
46   & 1,2,3 \\
47   & 1,2,3,6\\
48   & 1,2,3,6,7\\
56   & 1,2,3,4\\
57   & 1,2,3,4,6\\
58   & 1,2,3,4,6,7\\
67   & 1,2,3,4,5\\
68   & 1,2,3,4,5,7\\
78   & 1,2,3,4,5,6\\
\hline
\end{tabular}
\captionlistentry[table]{transversalhedron}
\captionsetup{labelformat=andtable}
\caption{A lexicographical subdivision of $P_{\M}$ and the table of $B \in \M$, where the bars of the base set $\{\bar{1},\ldots,\bar{n}\}$ is omitted for convenience.}
\label{fig:transversalhedron}
\end{figure}

\end{document}